\documentclass[12pt, a4paper]{article}

\usepackage[latin2]{inputenc}
\usepackage{amssymb}
\usepackage{paralist}
\usepackage{amsmath, calc}
\usepackage{amsfonts}
\usepackage{amsthm}
\usepackage{fullpage}
\usepackage{enumerate}
\usepackage{paralist}

\newtheorem{theorem}{Theorem}[section] %
\newtheorem{lemma}[theorem]{Lemma} %
\newtheorem{corollary}[theorem]{Corollary} %

\newtheorem*{theoremA}{Theorem A}
\newtheorem*{theoremB}{Theorem B}
\newtheorem*{theoremC}{Theorem C}
\newtheorem*{theoremD}{Theorem D}
\newtheorem*{theoremE}{Theorem E}
\newtheorem*{theoremF}{Theorem F}
\usepackage{cite}

\begin{document}

\title{On monotone increasing representation functions}

\author{
S\'andor Z. Kiss \thanks{Department of Algebra, Institute of Mathematics, Budapest University of
Technology and Economics, M\H{u}egyetem rkp. 3., H-1111, Budapest, Hungary. Email: ksandor@math.bme.hu.
This author was supported by the NKFIH Grant No. K129335.}, Csaba
S\'andor \thanks{Department of Stochastics, Institute of Mathematics, Budapest University of
Technology and Economics, M\H{u}egyetem rkp. 3., H-1111, Budapest, Hungary. Department of Computer Science and Information Theory, Budapest University of Technology and Economics, M\H{u}egyetem rkp. 3., H-1111 Budapest, Hungary, MTA-BME Lend\"ulet Arithmetic Combinatorics Research Group,
  ELKH, M\H{u}egyetem rkp. 3., H-1111 Budapest, Hungary . Email: csandor@math.bme.hu.
This author was supported by the NKFIH Grant No. K129335.},
Quan-Hui Yang \thanks{School of Mathematics and Statistics, Nanjing University of Information Science and Technology, Nanjing 210044, China; yangquanhui01@163.com.}
}

\date{}

\maketitle

\begin{abstract}
\noindent Let $k\ge 2$ be an integer and let $A$ be a set of nonnegative integers. The representation function $R_{A,k}(n)$ for the set $A$ is the number of representations of a nonnegative integer $n$ as the sum of $k$ terms from $A$. Let $A(n)$ denote the counting function of $A$.
Bell and Shallit recently gave a counterexample for a conjecture of Dombi and proved that if
$A(n)=o(n^{\frac{k-2}{k}-\epsilon})$ for some $\epsilon>0$, then $R_{\mathbb{N}\setminus A,k}(n)$
is eventually strictly increasing. In this paper, we improve this result to $A(n)=O(n^{\frac{k-2}{k-1}})$. We also give an example to show that this bound is best possible.

 {\it
2010 Mathematics Subject Classification:} 11B34

{\it Keywords and phrases:} additive number theory; additve representation function; monotonicity
\end{abstract}

\section{Introduction}

Let $\mathbb{N}$ be the set of nonnegative integers and let $A$ be a subset of nonnegative integers.
We use $A^n$ to denote the Cartesian product of $n$ sets $A$, that is,
$$A^n=\{(a_1,a_2,\ldots,a_n):~a_1,a_2,\ldots,a_n\in A\}.$$
Let $$R_{A,k}(n)=|\{ (a_{1}, a_2,\dots{}, a_{k})\in A^n: a_{1} +a_2+\cdots + a_{k} = n  \}|,
$$
$$R^{<}_{A,k}(n)=|\{ (a_{1}, a_2,\dots{}, a_{k})\in A^n: a_{1} + a_2+\cdots+ a_{k} = n, a_{1} <a_2< \cdots{} < a_{k}  \}|,
$$
$$R^{\le}_{A,k}(n)=|\{ (a_{1},a_2, \dots{} a_{k})\in A^n:a_{1} +a_2+ \cdots + a_{k} = n, a_{1} \le a_2\le \cdots{} \le a_{k}  \}|,
$$
where $|.|$ denotes the cardinality of a finite set. We say that $R_{A,k}(n)$ is monotonous increasing in $n$ from a certain point on (or eventually monotone increasing), if there exists an integer $n_{0}$ such that $R_{A,k}(n+1) \ge R_{A,k}(n)$ for all integers $n\ge n_0$.

We define the monotonicity of the two other representation functions $R^{<}_{A,k}(n)$ and $R^{\le}_{A,k}(n)$ in the same way. We denote the counting function of the set $A$ by
\[
A(n) = \sum_{\overset{a \in \mathcal{A}}{a \le n}}1.
\]
We define the lower asymptotic density of a set $A$ of natural numbers by
\[
\liminf_{n \rightarrow \infty}\frac{A(n)}{n}
\]
and the asymptotic density by
\[
\lim_{n \rightarrow \infty}\frac{A(n)}{n}
\]
whenever the limit exists. The generating function of a set $A$ of natural numbers is denoted by
\[
G_{A}(x) = \sum_{a\in A}x^{a}.
\]

Obviously, if $\mathbb{N}\setminus A$ is finite, then each of the functions $R_{A,2}(n), R^{<}_{A,2}(n)$ and
$R^{\le}_{A,2}(n)$ is eventually monotone increasing. In \cite{EST}, \cite{ESV}, Erd\H{o}s, S\'ark\"ozy and V.T. S\'os investigated whether there exists a set $A$ for which $\mathbb{N}\setminus A$ is infinite and the representation functions are monotone increasing from a certain point on. Namely, they proved the following theorems.

\begin{theoremA}
The function $R_{A,2}(n)$ is monotonous increasing from a certain point on, if and only if the sequence $A$ contains all the integers
from a certain point on, i.e., there exists an integer $n_{1}$ with
\[
A \cap \{n_{1}, n_{1}+1, n_{1}+2, \dots{} \} =  \{n_{1}, n_{1}+1, n_{1}+2, \dots{} \}.
\]
\end{theoremA}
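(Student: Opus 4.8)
\medskip
\noindent\emph{Proof strategy.} I would treat the two implications separately. The ``if'' direction is the easy one: if $B:=\mathbb N\setminus A$ is finite, put $m=1+\max B$; then for $n\ge 2m$ a pair $(a_1,a_2)$ with $a_1+a_2=n$ fails to lie in $A^2$ exactly when $a_1\in B$ (which forces $a_2=n-a_1>\max B$, hence $a_2\in A$) or $a_2\in B$, and these two alternatives cannot happen simultaneously, so
\[
R_{A,2}(n)=(n+1)-2|B|\qquad(n\ge 2m),
\]
which is strictly increasing. For the ``only if'' direction I would argue by contradiction: assume $R_{A,2}(n+1)\ge R_{A,2}(n)$ for all $n\ge n_0$ but $B$ is infinite (so $A$ is infinite too). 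Since $G_A(x)^2=\sum_n R_{A,2}(n)x^n$ and $G_A(x)=\tfrac1{1-x}-G_B(x)$, comparing coefficients gives the identity $R_{A,2}(n)=(n+1)-2B(n)+R_{B,2}(n)$, and taking successive differences the hypothesis becomes
\[
R_{B,2}(n+1)-R_{B,2}(n)\ \ge\ 2\cdot\mathbf 1_{B}(n+1)-1\qquad(n\ge n_0).
\]
Thus $R_{B,2}$ must jump up by at least $1$ across every element of $B$ past $n_0$ and may drop by at most $1$ across every other integer; telescoping between two consecutive elements of $B$ already shows that when $b\in B$ is large and the next element of $B$ is far away, $R_{B,2}(b)$ is forced to be nearly as large as $B(b)$.

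\medskip
The next step is a density estimate coming from parity. Every pair $(a,a')$ with $a\ne a'$ adds $2$ to $R_{A,2}(a+a')$, so $R_{A,2}(n)$ is odd precisely when $n$ is even and $n/2\in A$. Hence for two consecutive elements $m_1<m_2$ of $A$ the values $R_{A,2}(2m_1),R_{A,2}(2m_1+1),\dots,R_{A,2}(2m_2)$ have parities ``odd, even, $\dots$, even, odd'', so a monotone $R_{A,2}$ must increase by at least $2$ over $[2m_1,2m_2]$; summing over $m\le M$ gives $R_{A,2}(2M)\ge 2A(M)+O(1)$. Combined with the trivial bound $R_{A,2}(N)\le A(N)$ this yields $A(2M)\ge 2A(M)+O(1)$ for all large $M$, and iterating ($A(2^kM)\ge 2^k(A(M)-C)+C$) together with $A$ being infinite forces $A(N)/N$ to be bounded below; so $A$ has positive lower density. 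I would then play the double count $\sum_{n\le N}R_{A,B}(n)\ge A(N/2)B(N/2)$, where $R_{A,B}(n)=A(n)-R_{A,2}(n)$ is the number of pairs of $A\times B$ summing to $n$, against the parity bound $R_{A,B}(n)\le A(n)-2A(\lfloor n/2\rfloor)+O(1)$, which should pin the density of $A$ to $1$ (and in particular excludes $R_{A,2}$ being eventually constant), i.e. $B(n)=o(n)$. Finally, $B$ then has unbounded gaps, so the telescoped inequality for $R_{B,2}$, applied just after an element $b\in B$ with a long subsequent gap, forces $R_{A,B}(b)$ to be small, which means $B\cap[0,b]$ is, up to a few exceptions, symmetric about $b/2$; comparing two such windows $b<b'$ makes $B$ almost invariant under translation by $b'-b$ on a long interval, and iterating over many pairs forces $B$ to contain almost all of a long interval, contradicting $B(n)=o(n)$.

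\medskip
\noindent\textbf{Main obstacle.} Writing down the identity, the one-line inequality for $R_{B,2}$, and the parity estimate is short and routine. The delicate part --- and the step I expect to cause the most trouble --- is the last one: converting the local statement ``$R_{B,2}$ increases at every element of $B$'' into genuine global rigidity of $B$ (the chain ``almost symmetric $\Rightarrow$ almost translation-invariant $\Rightarrow$ almost full''), with enough quantitative control that the accumulated error terms stay smaller than the gain; closing the gap between ``positive lower density'' and ``density $1$'' also needs to be handled carefully.
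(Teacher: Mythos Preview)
The paper does \emph{not} contain a proof of Theorem~A. It is quoted in the introduction as a known result of Erd\H{o}s, S\'ark\"ozy and S\'os, with a reference to \cite{EST}, \cite{ESV}; the Proofs section only treats Lemma~2.1, Theorems~1.1, 1.3, 1.4 and Lemma~2.2. So there is nothing in the present paper to compare your argument against.

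As for the proposal itself: the ``if'' direction and the algebraic setup for ``only if'' (the identity $R_{A,2}(n)=(n+1)-2B(n)+R_{B,2}(n)$, the difference inequality, and the parity observation $R_{A,2}(n)\equiv \mathbf 1_{n/2\in A}\pmod 2$) are all correct and standard. The deduction that $A$ has positive lower density via $A(2M)\ge 2A(M)+O(1)$ is also sound. However, from that point on the argument is only a heuristic. Your Step~5 (pushing the density of $A$ from ``positive'' to ``$1$'') is asserted rather than proved: the inequality $\sum_{n\le N}R_{A,B}(n)\le \sum_{n\le N}(A(n)-2A(\lfloor n/2\rfloor))+O(N)$ does not by itself force $B(n)=o(n)$ without further information relating $\sum A(n)$ to $\sum A(\lfloor n/2\rfloor)$, and you have not supplied that. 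Your Step~6 is explicitly labelled as the main obstacle, and indeed the chain ``$R_{B,2}$ jumps at elements of $B$ $\Rightarrow$ near-symmetry of $B\cap[0,b]$ $\Rightarrow$ near translation-invariance $\Rightarrow$ $B$ contains a long interval'' is a plausible-sounding narrative, not an argument: you have not specified which windows to compare, how the error terms propagate under iteration, or why the final ``almost full interval'' actually materialises before the errors overwhelm the gain. In short, the hard implication is not proved here; what you have is the correct opening moves together with a sketch whose decisive steps remain to be justified. If you want to complete it, you will need to consult the original Erd\H{o}s--S\'ark\"ozy--S\'os paper, where the contradiction is obtained by a rather different (and shorter) analytic argument on the generating function near $x=1$ rather than by the combinatorial rigidity route you outline.
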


\begin{theoremB}
There exists an infinite set $A\subseteq \mathbb{N}$ such that
$A(n) < n - cn^{1/3}$ for $n > n_{0}$ and $R^{<}_{A,2}(n)$ is monotone increasing from a certain point on.
\end{theoremB}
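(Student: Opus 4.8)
The plan is to realise $A$ as the complement $A=\mathbb{N}\setminus B$ of a sparse set $B$ and to study the consecutive differences of $R^{<}_{A,2}$ through generating functions. Every ordered pair summing to $n$ is increasing, decreasing, or diagonal, so $R_{A,2}(n)=2R^{<}_{A,2}(n)+\delta(n)$, where $\delta(n)=1$ if $n$ is even with $n/2\in A$ and $\delta(n)=0$ otherwise; hence $\sum_{n\ge0}R^{<}_{A,2}(n)x^{n}=\tfrac12\big(G_{A}(x)^{2}-G_{A}(x^{2})\big)$, and $R^{<}_{A,2}$ is eventually monotone increasing exactly when $(1-x)\cdot\tfrac12\big(G_{A}(x)^{2}-G_{A}(x^{2})\big)$ has nonnegative coefficients from some index on. Substituting $G_{A}(x)=\frac1{1-x}-G_{B}(x)$ and simplifying --- the terms free of $G_{B}$ collapse to $\frac{x}{1-x^{2}}$ --- gives
$$(1-x)\sum_{n\ge0}R^{<}_{A,2}(n)x^{n}=\frac{x}{1-x^{2}}-G_{B}(x)+\frac{1-x}{2}\,G_{B}(x)^{2}+\frac{1-x}{2}\,G_{B}(x^{2}),$$
and comparing coefficients expresses $c_{n}:=R^{<}_{A,2}(n)-R^{<}_{A,2}(n-1)$ as a sum of four terms: $1$ if $n$ is odd, else $0$; $-1$ if $n\in B$, else $0$; $\tfrac12\big(R_{B,2}(n)-R_{B,2}(n-1)\big)$; and $+\tfrac12$ if $n$ is even with $n/2\in B$, $-\tfrac12$ if $n$ is odd with $(n-1)/2\in B$, else $0$. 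It now suffices to choose $B$ so that $c_{n}\ge0$ for all large $n$.

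I would impose three conditions on $B$. (i) $B$ consists of odd integers, so that whenever $n\in B$ the harmful $-1$ is matched by the parity term $+1$. (ii) $B$ is a Sidon set, so $R_{B,2}(m)\le2$ for every $m$, whence $\tfrac12\big(R_{B,2}(n)-R_{B,2}(n-1)\big)\in[-1,1]$; moreover $R_{B,2}(m)=2$ forces $m/2\notin B$, since a diagonal and a non-diagonal representation of the same $m$ would violate the Sidon property. (iii) $B\subseteq\{m:m\equiv1\pmod4\}$, so that $B+B\subseteq\{m:m\equiv2\pmod4\}$ and $R_{B,2}$ vanishes on the odd numbers and on the multiples of $4$. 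With (i)--(iii) one checks $c_{n}\ge0$ in three cases. If $n\in B$ then $n$ is odd, $R_{B,2}(n)=0$, $R_{B,2}(n-1)=0$ (as $n-1\equiv0\pmod4$), and $(n-1)/2$ is even, so $c_{n}=1-1+0+0=0$. If $n$ is even then $R_{B,2}(n-1)=0$ and $c_{n}=\tfrac12R_{B,2}(n)+\tfrac12[\,n/2\in B\,]\ge0$. If $n$ is odd with $n\notin B$ then $R_{B,2}(n)=0$, and one splits on $R_{B,2}(n-1)\in\{0,1,2\}$: the value $0$ gives $c_{n}\ge1-\tfrac12>0$; the value $1$ forces $(n-1)/2\in B$ and gives $c_{n}=1-\tfrac12-\tfrac12=0$; the value $2$ forces $(n-1)/2\notin B$ by (ii) and gives $c_{n}=1-1=0$. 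Thus $c_{n}\ge0$ for all $n$, so $R^{<}_{A,2}$ is (eventually, indeed from the start) monotone increasing.

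It remains to produce such a $B$ with $B(n)\gg n^{1/3}$. It is classical that there are infinite Sidon sets $C$ with $C(n)\gg n^{1/3}$. Given one, set $B=4C+1$: this is again a Sidon set, because $(4c+1)+(4c'+1)$ determines $c+c'$; it lies in $\{m:m\equiv1\pmod4\}$; and $B(n)=C\!\left(\lfloor(n-1)/4\rfloor\right)\gg n^{1/3}$. Then $A=\mathbb{N}\setminus B$ is infinite, $A(n)=n+1-B(n)<n-cn^{1/3}$ for a suitable $c>0$ and all $n>n_{0}$, and by the above $R^{<}_{A,2}$ is eventually monotone increasing, as required.

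The main obstacle is precisely the case analysis that pins down the residue class of $B$: $c_{n}$ is a signed combination of four terms of absolute values at most $1,1,1$ and $\tfrac12$, which a priori can sum to a negative number, and one must force cancellation for every residue of $n$ at once. This is what drives each of the three conditions --- ``$B$ odd'' neutralises $-[\,n\in B\,]$; ``$B$ Sidon'' both bounds the $R_{B,2}$-difference and supplies the structural fact $R_{B,2}(m)=2\Rightarrow m/2\notin B$; and ``$B\subseteq1+4\mathbb{N}$'' makes $B+B$ miss the class $0\pmod4$, which is what turns the delicate subcases into the equalities $c_{n}=0$. Already the small variant $B=\{1,3\}$ (Sidon but not in $1+4\mathbb{N}$) yields $c_{3}=-1$. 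Once the construction is set up, the density clause $A(n)<n-cn^{1/3}$ is routine, modulo invoking the existence of an infinite Sidon set whose counting function is $\gg n^{1/3}$.
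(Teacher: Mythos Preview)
The paper does not contain a proof of Theorem~B; it is quoted as background from Erd\H{o}s, S\'ark\"ozy and S\'os \cite{ESV}, so there is no in-paper argument to compare against. Your proof is correct: the identity
\[
(1-x)\sum_{n\ge0}R^{<}_{A,2}(n)x^{n}=\frac{x}{1-x^{2}}-G_{B}(x)+\frac{1-x}{2}\,G_{B}(x)^{2}+\frac{1-x}{2}\,G_{B}(x^{2})
\]
is derived correctly, the four-term reading of $c_{n}$ is accurate, and the case analysis under the hypotheses ``$B$ Sidon'' and ``$B\subseteq 1+4\mathbb{N}$'' is airtight (in particular the two structural facts $R_{B,2}(m)=1\Rightarrow m/2\in B$ and $R_{B,2}(m)=2\Rightarrow m/2\notin B$ are exactly what is needed). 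Passing to $B=4C+1$ with $C$ an infinite Sidon set of counting function $\gg n^{1/3}$ then gives $A(n)<n-cn^{1/3}$ for large $n$, as required.

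For context, this is essentially the original construction of Erd\H{o}s--S\'ark\"ozy--S\'os: they too take $A$ to be the complement of a Sidon set placed in a fixed residue class so that the parity and sumset constraints line up. Your presentation is a clean, self-contained version of that argument, with the generating-function bookkeeping made fully explicit.
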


\begin{theoremC}
If
$$A(n) = o\Big(\frac{n}{\log n}\Big),$$
then the functions $R^{<}_{A,2}(n)$ and
$R^{\le}_{A,2}(n)$ cannot be monotonous increasing in $n$ from a certain point on.
\end{theoremC}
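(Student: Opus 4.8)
The plan is to prove the (formally stronger) assertion that, under $A(n)=o(n/\log n)$, \emph{neither} $R^{<}_{A,2}$ \emph{nor} $R^{\le}_{A,2}$ can by itself be eventually monotone increasing. That they cannot \emph{both} be is immediate from Theorem A, since $R_{A,2}(n)=R^{<}_{A,2}(n)+R^{\le}_{A,2}(n)$ would then be eventually increasing, forcing $\mathbb N\setminus A$ to be finite, i.e. $A(n)=n-O(1)$, against the hypothesis; so the content is the single‑function version (this is also consistent with Theorem B, which exhibits a set of density $1$ with $R^{<}_{A,2}$ monotone). I argue for $R^{\le}_{A,2}$; the $R^{<}_{A,2}$ case is identical after changing the sign of $G(x^2)$ below. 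Write $G=G_A$ and recall $\sum_{n\ge0}R^{\le}_{A,2}(n)x^n=\tfrac12(G(x)^2+G(x^2))$, and suppose for contradiction that $R^{\le}_{A,2}$ is nondecreasing on $[n_0,\infty)$; since $A$ is infinite we may assume $R^{\le}_{A,2}(n)\ge1$ there.

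\emph{Step 1 (analytic squeeze).} Monotonicity says $(1-x)\sum R^{\le}_{A,2}(n)x^n=\tfrac12(1-x)(G(x)^2+G(x^2))$ has only finitely many negative Taylor coefficients. Evaluating at $x=\pm r$ $(r\uparrow1)$ and using that such a series satisfies $|f(-r)|\le f(r)+O(1)$, one obtains $(1+r)(G(-r)^2+G(r^2))\le(1-r)(G(r)^2+G(r^2))+O(1)$, hence in particular
\[
2r\,G(r^2)\ \le\ (1-r)\,G(r)^2+O(1)\qquad(r\uparrow1).
\]
Abel summation gives $G(r)=(1-r)\sum_n A(n)r^n$, so $A(n)=o(n/\log n)$ yields $G(r)=o\!\left((1-r)^{-1}\log^{-1}\tfrac1{1-r}\right)$; plugging this into the displayed bound gives $G(r^2)=o\!\left((1-r)^{-1}\log^{-2}\tfrac1{1-r}\right)$, and iterating, $A(n)=o\!\left(n\,(\log n)^{-m}\right)$ for every fixed $m$. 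Thus monotonicity already forces $A$ to be far sparser than assumed — yet this alone is not a contradiction, so one more, combinatorial, ingredient is needed.

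\emph{Step 2 (rigidity).} From $\sum_{n\le N}R^{\le}_{A,2}(n)=\#\{(a,b)\in A^2:a\le b,\ a+b\le N\}\le\binom{A(N)+1}{2}$ together with $\sum_{n\le N}R^{\le}_{A,2}(n)\ge\tfrac N2R^{\le}_{A,2}(\lfloor N/2\rfloor)$ (monotonicity), we get $R^{\le}_{A,2}(N)=o\!\left(N(\log N)^{-m}\right)$ for every $m$; and as $\sum_{n=n_0}^N\!\bigl(R^{\le}_{A,2}(n+1)-R^{\le}_{A,2}(n)\bigr)=R^{\le}_{A,2}(N+1)-R^{\le}_{A,2}(n_0)$ with nonnegative terms, $R^{\le}_{A,2}$ strictly increases at no more than $R^{\le}_{A,2}(N)$ points of $[n_0,N]$. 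Hence $R^{\le}_{A,2}$ is constant on each of a family of intervals covering $[n_0,N]$ outside $o\!\left(N(\log N)^{-m}\right)$ points; enlarging the exceptional set by the $O(A(N))$ points with $\lfloor n/2\rfloor\in A$, one finds an interval $[u,v]\subseteq[n_0,N]$ with $v-u\gg(\log N)^{m}$ on which $R^{\le}_{A,2}$ is constant \emph{and} $\lfloor n/2\rfloor\notin A$. Using $R_{A,2}(n)=2R^{\le}_{A,2}(n)-\mathbf{1}_{\{n/2\in A\}}$ for even $n$ and $R_{A,2}(n)=2R^{\le}_{A,2}(n)$ for odd $n$, the \emph{ordered} representation function $R_{A,2}$ is then \emph{exactly constant} on $[u,v]$ — i.e. $\sum_{m\le n}R_{A,2}(m)$ is an exact affine function of $n$ there.

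\emph{Step 3 (the obstruction — Erd\H{o}s--Fuchs).} It remains to show that, for an infinite set $A$, $R_{A,2}$ cannot be constant on arbitrarily long intervals. This is exactly an Erd\H{o}s--Fuchs situation, and I would treat it by the circle method at the scale $1-r\asymp(v-u)^{-1}$: on $[u,v]$ the constant part of $R_{A,2}$ has an explicit generating contribution of size $\asymp(1-r)^{-1}$, which one compares near $\theta=0$ with the rest, against the identity $\int_0^1|G(re^{2\pi i\theta})|^2\,d\theta=G(r^2)$ and the trivial bound for $\theta$ away from $0$; the length $v-u$ — which, by Step 2, can be taken to exceed any power of $\log N$ — then forces $G(r^2)$ to be too large, contradicting the Step 1 estimate $G(r^2)=o\!\left((1-r)^{-1}\log^{-2}\tfrac1{1-r}\right)$. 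I expect this last step to be the main obstacle: the Erd\H{o}s--Fuchs inequality has to be made quantitative and \emph{local}, on windows whose length Step 2 controls only up to the logarithmic rate in the hypothesis, and it is this localization that accounts for the sharp threshold $n/\log n$ in the statement (a rate such as $o(\sqrt n)$ trivializes the problem, since then $A+A$ omits infinitely many integers and $R^{\le}_{A,2}$, $R^{<}_{A,2}$ vanish infinitely often, ruling out monotonicity at once).
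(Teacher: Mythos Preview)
The paper does not prove Theorem~C; it is quoted in the introduction as a known result of Erd\H{o}s, S\'ark\"ozy and S\'os (references \cite{EST}, \cite{ESV}), and no argument for it appears anywhere in the paper. There is therefore nothing here to compare your proposal against.

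On its own merits, your proposal is a plan rather than a proof. Steps~1 and~2 are essentially sound: the inequality $2rG(r^2)\le(1-r)G(r)^2+O(1)$ does follow from eventual monotonicity of $R^{\le}_{A,2}$, the bootstrap to $A(n)=o\bigl(n(\log n)^{-m}\bigr)$ for every $m$ works, and the pigeonhole giving long intervals on which $R_{A,2}$ is constant (at a positive value) is fine. The genuine gap is Step~3, which you yourself flag as the main obstacle: you need a \emph{local} Erd\H{o}s--Fuchs type statement---that $R_{A,2}$ cannot equal a positive constant on an interval whose length is an arbitrary power of $\log N$---and you give only a heuristic circle-method sketch, with no indication of why the Step~1 bound on $G(r^2)$ is strong enough to close the argument at the scale $1-r\asymp(v-u)^{-1}$. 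A smaller gap: your claim that the $R^{<}_{A,2}$ case is ``identical after changing the sign of $G(x^2)$'' is too quick, since after the sign change one has $f(-r)=\tfrac12(1+r)\bigl(G(-r)^2-G(r^2)\bigr)$, which need not be nonnegative, and the one-line derivation of $2rG(r^2)\le(1-r)G(r)^2+O(1)$ then fails; that case requires a separate treatment.
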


\begin{theoremD}
If $A\subseteq \mathbb{N}$ is an infinite set with
$$\lim_{n\rightarrow \infty}\frac{n-A(n)}{\log n} = \infty,
$$
then $R^{\le}_{A,2}(n)$ cannot be monotone increasing from a certain point on.
\end{theoremD}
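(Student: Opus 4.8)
\medskip
\noindent\textbf{Proof proposal.}
Put $B=\mathbb{N}\setminus A$; the hypothesis becomes $B(n)/\log n\to\infty$ (so $B$ is infinite), while ``$A$ infinite'' says that $\mathbb{N}\setminus B$ is infinite. Suppose towards a contradiction that $R^{\le}_{A,2}(n+1)\ge R^{\le}_{A,2}(n)$ for all $n\ge n_0$. The plan is to run a generating function argument in the spirit of Erd\H{o}s--S\'ark\"ozy--S\'os, evaluating the relevant series near $x=-1$.

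The starting identity is $\sum_{n\ge0}R^{\le}_{A,2}(n)x^n=\tfrac12\bigl(G_A(x)^2+G_A(x^2)\bigr)$, the term $G_A(x^2)$ accounting for the diagonal $a_1=a_2$. Writing $G_A(x)=\tfrac1{1-x}-G_B(x)$ and multiplying by $1-x$,
\[
D(x):=(1-x)\sum_{n\ge0}R^{\le}_{A,2}(n)x^n=\frac12\left(\frac{\bigl(1-(1-x)G_B(x)\bigr)^2}{1-x}+\frac{1-(1-x^2)G_B(x^2)}{1+x}\right).
\]
Monotonicity from $n_0$ on means $D(x)=P(x)+E(x)$ with $P$ a polynomial of degree at most $n_0$ and $E$ a power series with non-negative coefficients. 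For $t\in(0,1)$ this gives $E(-t)-E(t)=-2\sum_{m\ \mathrm{odd}}e_mt^m\le0$, hence
\[
D(-t)-D(t)\le P(-t)-P(t)=O(1)\qquad(t\to1^-).
\]

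Substituting $x=\pm t$ into the formula for $D$ and subtracting,
\[
D(-t)-D(t)=\frac{\bigl(1-(1+t)G_B(-t)\bigr)^2-\bigl(1-(1-t^2)G_B(t^2)\bigr)}{2(1+t)}+\phi(t),\qquad \phi(t):=G_B(t)-\frac{1+t}{2}G_B(t^2)-\frac{1-t}{2}G_B(t)^2 .
\]
The first summand is bounded below (its numerator is at least $0-1$, a square being nonnegative and $1-(1-t^2)G_B(t^2)\le1$), so the two displays force $\phi(t)=O(1)$ as $t\to1^-$. It therefore suffices to show $\limsup_{t\to1^-}\phi(t)=\infty$. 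Using $G_B(x)^2=2\sum_nR^{\le}_{B,2}(n)x^n-G_B(x^2)$ one rewrites $\phi(t)=(1-t)\sum_{M\ge0}C_Mt^M$ with
\[
C_M=B(M)-B\bigl(\lfloor\tfrac{M-1}{2}\rfloor\bigr)-R^{\le}_{B,2}(M)\ \ge\ 0,
\]
the inequality holding because every $M=b+b'$ with $b\le b'$ in $B$ has $b'\in B\cap[\lceil M/2\rceil,M]$ and $\lceil M/2\rceil-1=\lfloor\tfrac{M-1}{2}\rfloor$. Since the $C_M$ are non-negative, $\phi(t)$ stays bounded as $t\to1^-$ if and only if $\Sigma(L):=\sum_{M\le L}C_M=O(L)$, and a short rearrangement of sums gives
\[
\Sigma(L)=\sum_{b\in B,\ b\le L}\Bigl(\min(b,L-b)-\tfrac12B(L-b)\Bigr)+O\bigl(B(L)\bigr).
\]

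Everything thus reduces to the statement ``if $B$ and $\mathbb{N}\setminus B$ are both infinite and $B(n)/\log n\to\infty$, then $\Sigma(L)\ne O(L)$'', which is the main obstacle. Here $\sum_{b\in B,\ b\le L}B(L-b)=|\{(b,b')\in B^2:b+b'\le L\}|$ is a ``pair count'', and the idea is that along a suitable sequence of scales $L$ the positive contributions $\min(b,L-b)$ of elements $b\in B$ near $L/2$ sum to something superlinear in $L$ while the pair count is too small to cancel them: the hypothesis $B(n)/\log n\to\infty$ supplies, on those scales, enough elements of $B$ near $L/2$, and $\mathbb{N}\setminus B$ being infinite is exactly what rules out the exact cancellation $\Sigma(L)=O(L)$ that occurs when $B$ is cofinite. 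Turning this heuristic into a proof uniformly over all admissible $B$ -- in particular when $B$ is dense, or very irregular (say with long multiplicative gaps, where one must choose $L$ so that $B$ is locally as dense as possible at scale $L$ and bound the pair count with care) -- is the part that requires real work.
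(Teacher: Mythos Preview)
The paper does not prove Theorem~D; it is quoted from the literature (Erd\H{o}s--S\'ark\"ozy--S\'os \cite{ESV}, and independently Balasubramanian \cite{Bala}) as background for the paper's own results on $k\ge3$. There is therefore no proof in the paper to compare against.

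On its own merits, your generating-function reduction is sound: the identity for $D(x)$, the inequality $D(-t)-D(t)\le O(1)$ coming from eventual monotonicity, the splitting off of the bounded-below square term, and the rewriting of the remainder as $\phi(t)=(1-t)\sum_{M\ge0} C_M t^M$ with
\[
C_M=B(M)-B\bigl(\lfloor(M-1)/2\rfloor\bigr)-R^{\le}_{B,2}(M)\ge0
\]
all check out, as does the Tauberian equivalence between $\phi(t)=O(1)$ and $\Sigma(L)=O(L)$ for non-negative $C_M$. But the entire content of the theorem has been pushed into the final claim ``$\Sigma(L)\ne O(L)$ whenever $B(n)/\log n\to\infty$ and $\mathbb{N}\setminus B$ is infinite'', which you do not prove and yourself label ``the part that requires real work''. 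That is not a technicality: for $B=\mathbb{N}$ one has $C_M\equiv0$, so for $B$ nearly cofinite the quantity $\Sigma(L)$ is a delicate difference of two terms each of size $\asymp L^2$, and showing that removing any infinite set forces $\Sigma(L)/L$ to be unbounded under only the hypothesis $B(n)/\log n\to\infty$ is exactly the substantive combinatorial step carried out in \cite{ESV} and \cite{Bala}. As it stands, the proposal is a correct framework but not a proof.
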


The latter theorem was proved by Balasubramanian \cite{Bala} independently.

Very little is known when $k > 2$. Many years ago, it was proved in \cite{tang} and independently in \cite{Kiss} that

\begin{theoremE}
If $k$ is an integer with $k > 2$, $A \subseteq \mathbb{N}$ and
$R_{A,k}(n)$ is monotonous increasing in $n$ from a certain point on,
then
\[
A(n) = o\bigg(\frac{n^{2/k}}{(\log n)^{2/k}}\bigg)
\]
cannot hold.
\end{theoremE}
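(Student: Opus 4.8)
The plan is to work with the ordinary generating function $G_A(x)=\sum_{a\in A}x^{a}$ and to exploit the one structural fact that monotonicity supplies: the series $(1-x)G_A(x)^{k}=\sum_{n}\bigl(R_{A,k}(n)-R_{A,k}(n-1)\bigr)x^{n}$ has nonnegative coefficients for $n\ge n_{0}$, so $(1-x)G_A(x)^{k}=P(x)+H(x)$ with $P$ a fixed polynomial and $H$ a power series whose coefficients are all $\ge 0$; in particular $|H(z)|\le H(|z|)$ on $|z|<1$. First one disposes of the degenerate case (if $A$ is finite the statement is vacuous), and observes that for infinite $A$ monotonicity together with $R_{A,k}(ka)\ge1$ for $a\in A$ forces $R_{A,k}(n)\ge1$ for all large $n$; hence $A$ is an asymptotic basis of order $k$, so $A(n)\gg n^{1/k}$ and therefore $G_A(1-1/N)\gg N^{1/k}$. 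This last bound is only needed to absorb $P$ into error terms below.

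The core is a self‑improving estimate for $M(N):=G_A(1-1/N)$. Fix $N$, set $r=1-1/N$, and restrict to the circle $|z|=r$. Since $|P(z)|=O(1)$ and $|H(z)|\le H(r)=\tfrac1N M(N)^{k}-P(r)\ll \tfrac1N M(N)^{k}$ for $N$ large, we get $|1-z|\,|G_A(z)|^{k}\ll \tfrac1N M(N)^{k}$, whence
\[
|G_A(re^{i\theta})|^{2}\ \ll\ \frac{M(N)^{2}}{\bigl(N\,|1-re^{i\theta}|\bigr)^{2/k}}.
\]
Feed this into Parseval on the circle of radius $r$: because $G_A$ has coefficients in $\{0,1\}$,
\[
G_A(r^{2})=\sum_{a\in A}r^{2a}=\frac1{2\pi}\int_{-\pi}^{\pi}|G_A(re^{i\theta})|^{2}\,d\theta .
\]
Splitting the integral at $|\theta|=1/N$, estimating the central arc by $M(N)^{2}\cdot\tfrac2N$ (using $|G_A(z)|\le M(N)$ there) and the rest with the displayed bound together with $|1-re^{i\theta}|\gg|\theta|$, one is led to $G_A(r^{2})\ll_{k}M(N)^{2}N^{-2/k}$. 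This is the step I expect to need the most care, and it is exactly where $k>2$ enters: after pulling out $N^{-2/k}$ the outer integral is a constant times $\int_{1/N}^{\pi}\theta^{-2/k}\,d\theta$, which stays bounded as $N\to\infty$ precisely because $2/k<1$ (for $k=2$ it is $\asymp\log N$, the origin of the logarithm in the $k=2$ results).

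It remains to iterate and then descend to the counting function. Since $r^{2}=1-1/N'$ with $N'\in(N/2,N/2+1)$ and $M$ is nondecreasing, the estimate above reads $M(N)^{2}\gg_{k}N^{2/k}M(N/2)$. Taking logarithms along $N=2^{m}$ gives a recurrence $L(m)\ge\tfrac12 L(m-1)+\tfrac{m\log 2}{k}-O(1)$, whose solution satisfies $L(m)\ge\tfrac{2m\log 2}{k}-O(1)$; hence $G_A(1-1/N)=M(N)\gg_{k}N^{2/k}$ for all large $N$. Finally, from
\[
G_A(1-1/N)=\sum_{j\ge 0}\ \sum_{a\in A\cap[jN,(j+1)N)}(1-1/N)^{a}\ \le\ \sum_{j\ge 0}\bigl(A((j+1)N)-A(jN)\bigr)e^{-j},
\]
a pigeonhole in $j$ produces some $N'\ge N$ with $A(N')\gg_{k}(N')^{2/k}$; letting $N\to\infty$ through powers of $2$ gives $A(n)\gg_{k}n^{2/k}$ for infinitely many $n$, so in particular $A(n)=o\!\bigl(n^{2/k}/(\log n)^{2/k}\bigr)$ cannot hold. (If one prefers to see the logarithmic factor appear in its stated shape, replace this last step by: under the assumption $A(n)=o(n^{2/k}/(\log n)^{2/k})$, bound $\sum_{n}A(n)(1-1/N)^{n}=N\,G_A(1-1/N)$ using that $\log n\sim\log N$ over the range $n\lesssim N\log N$ carrying essentially all the mass, and contradict $G_A(1-1/N)\gg_{k}N^{2/k}$.)
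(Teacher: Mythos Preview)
The paper does not contain a proof of Theorem~E; it is quoted in the introduction as a prior result of Tang \cite{tang} and of Kiss \cite{Kiss}, and the section ``Proofs'' treats only Lemma~2.1, Theorem~1.1, Lemma~2.2, Theorem~1.3 and Theorem~1.4. There is therefore nothing in this paper to compare your argument against.

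That said, your outline is essentially correct and follows the standard generating-function approach used in the original references. The decomposition $(1-x)G_A(x)^{k}=P(x)+H(x)$ with $H$ having nonnegative coefficients, the pointwise bound $|G_A(re^{i\theta})|^{2}\ll M(N)^{2}/(N|1-re^{i\theta}|)^{2/k}$ on $|z|=r=1-1/N$, and Parseval's identity $G_A(r^{2})=\tfrac{1}{2\pi}\int_{-\pi}^{\pi}|G_A(re^{i\theta})|^{2}\,d\theta$ combine exactly as you say to give $G_A(r^{2})\ll_{k} M(N)^{2}N^{-2/k}$; the convergence of $\int_{0}^{\pi}\theta^{-2/k}\,d\theta$ is precisely where $k>2$ enters. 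The iteration $M(N)^{2}\gg_{k} N^{2/k}M(N/2)$ then yields $M(N)\gg_{k} N^{2/k}$, and the descent to $A(n)$ is routine. In fact your argument gives the stronger conclusion $\limsup_{n\to\infty}A(n)/n^{2/k}>0$, which certainly implies the stated theorem. One minor point: the ``pigeonhole in $j$'' at the very end is a bit loose as written; the Abel-summation alternative you mention in parentheses is the cleaner way to finish, since $A(n)=o(n^{2/k})$ would give $G_A(1-1/N)=\tfrac{1}{N}\int_{0}^{\infty}A(t)(1-1/N)^{t}\,dt=o(N^{2/k})$ directly, contradicting $M(N)\gg_{k}N^{2/k}$.
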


Furthermore, Dombi \cite{Dom} constructed sets $A$ of asymptotic density $\frac{1}{2}$ such that for $k > 4$, the function $R_{A,k}(n)$ is monotone increasing from a certain point on. His constructions are based on the Rudin-Shapiro sets and Thue-Morse sequences. On the other hand, Dombi gave the following
conjecture.

\noindent{\bf Dombi's Conjecture.} If $\mathbb{N}\setminus A$ is infinite, then $R_{A,k}(n)$ cannot be strictly increasing.

For $k \ge 3$, Bell and Shallit recently gave a counterexample \cite{Bell} of Dombi's conjecture by applying tools from automata theory and logic. They proved the following result.

\begin{theoremF} Let $k$ be integer with $k\ge 3$ and let $F\subseteq \mathbb{N}$ with $0\notin F$. If $F(n) = o(n^{\alpha})$ for $\alpha < (k-2)/k$ and $A = \mathbb{N}\setminus F$, then $R_{A,k}(n)$ is eventually strictly increasing.
\end{theoremF}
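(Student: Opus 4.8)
The plan is to compute $R_{A,k}(n+1)-R_{A,k}(n)$ via generating functions and show it is eventually positive. Write $G_A(x) = \frac{1}{1-x} - G_F(x)$, where $F = \mathbb{N}\setminus A$, so that
\[
\sum_{n\ge 0} R_{A,k}(n)x^n = G_A(x)^k = \left(\frac{1}{1-x} - G_F(x)\right)^k = \sum_{j=0}^k \binom{k}{j}(-1)^j \frac{G_F(x)^j}{(1-x)^{k-j}}.
\]
The $j=0$ term contributes $\binom{n+k-1}{k-1}$ to $R_{A,k}(n)$, whose forward difference is $\binom{n+k-1}{k-2} \asymp n^{k-2}$, a polynomial of degree $k-2$ with positive leading coefficient. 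The strategy is to show every other term contributes $o(n^{k-2})$ to the forward difference, so the $j=0$ term dominates. This is where the hypothesis $F(n) = O(n^{(k-2)/(k-1)})$ enters.

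First I would handle the terms $j\ge 2$. For these, $\frac{G_F(x)^j}{(1-x)^{k-j}}$ has coefficients that are partial sums (iterated $k-j-1$ times, when $j\le k-1$) of the coefficients of $G_F(x)^j$, i.e., of $R_{F,j}(m)$ for $m\le n$. Now $R_{F,j}(m) \le R_{F,2}(m)\cdot F(m)^{j-2}$ crudely, and $\sum_{m\le n} R_{F,2}(m) = F(n)^2$, so the coefficient of $x^n$ in $\frac{G_F(x)^j}{(1-x)^{k-j}}$ is $O(n^{k-j-1} F(n)^j)$. Taking a forward difference only helps, so this term contributes $O(n^{k-j-1} F(n)^j)$; with $F(n) = O(n^{(k-2)/(k-1)})$ this is $O(n^{k-j-1 + j(k-2)/(k-1)})$. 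One checks the exponent is $< k-2$ precisely when $j \ge 2$ (the exponent equals $k-2$ at $j=1$ and decreases as $j$ grows past... wait — I must be careful: I should verify $k-j-1 + j(k-2)/(k-1) < k-2 \iff j\cdot\frac{-1}{k-1} < -1 \iff j > k-1$, which fails).

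So the crude bound is not enough and the genuine difficulty is the $j=1$ term together with a sharper look at $j=2,\dots,k-1$; the real content is that taking the forward difference of $\frac{G_F(x)^j}{(1-x)^{k-j}}$ gains a full power of $n$ over the naive size estimate, because the difference operator kills the top-order term of the $(k-j-1)$-fold partial sum. Concretely, the coefficient of $x^n$ in $(1-x)\cdot\frac{G_F(x)^j}{(1-x)^{k-j}} = \frac{G_F(x)^j}{(1-x)^{k-j-1}}$ is $\sum_{m\le n}\binom{n-m+k-j-2}{k-j-2} R_{F,j}(m)$ for $j\le k-1$, and since $R_{F,j}$ itself has "density exponent" $j\cdot\frac{k-2}{k-1} - (j-1)$... this requires the sharper inequality $\sum_{m\le n} R_{F,j}(m) \le \binom{F(n)+j-1}{j} = O\big(F(n)^j/j!\big)$ combined with the kernel $\binom{n-m+k-j-2}{k-j-2} = O(n^{k-j-2})$, giving $O(n^{k-j-2}F(n)^j)$; now the exponent $k-j-2 + j(k-2)/(k-1) < k-2 \iff j(k-2)/(k-1) < j \iff k-2 < k-1$, true for all $j\ge 1$. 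The $j=k$ term, $(-1)^k G_F(x)^k$, has coefficients $R_{F,k}(n) = O(F(n)^k/k!) = O(n^{k(k-2)/(k-1)})$, and $k(k-2)/(k-1) < k-2$ fails, so instead one uses $R_{F,k}(n) \le F(n)^{k-2}R_{F,2}(n)$ and sums, or more simply bounds $\sum_{m\le n}$ of the difference — here the difference $R_{F,k}(n+1)-R_{F,k}(n)$ need not be small pointwise, so I would instead not isolate $j=k$ but note $\sum_{m\le n}R_{F,k}(m) = O(n^{k-j-1}\cdots)$ is not available since there is no $(1-x)$ in the denominator; the clean fix is to absorb the $j=k$ term by writing $G_F(x)^k = (1-x)\cdot\frac{G_F(x)^k}{1-x}$ only after multiplying the whole identity by $(1-x)$, i.e., work throughout with $(1-x)G_A(x)^k$ whose $x^n$-coefficient is exactly $R_{A,k}(n+1)-R_{A,k}(n)$ up to boundary, making every $j\ge 1$ term of the form $\frac{G_F(x)^j}{(1-x)^{k-j-1}}$ and for $j=k$ giving $\frac{G_F(x)^k}{(1-x)^{-1}} = (1-x)G_F(x)^k$ with coefficient $R_{F,k}(n)-R_{F,k}(n-1)$, which is $O(n^{k-2}\cdot\text{something small})$ by the same partial-summation trick applied one level down. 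The main obstacle, then, is organizing these estimates so that the gain of one power of $n$ from the difference operator is rigorously captured for every $j$, and verifying the borderline arithmetic $k-j-2+j\frac{k-2}{k-1}<k-2$; once that is in hand, $R_{A,k}(n+1)-R_{A,k}(n) = \binom{n+k-1}{k-2} + o(n^{k-2}) > 0$ for large $n$.
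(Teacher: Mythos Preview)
Your generating-function approach is exactly the paper's: after multiplying by $(1-x)$ you recover precisely the identity of Lemma~2.1, and the plan ``main term $\binom{n+k-2}{k-2}\sim n^{k-2}/(k-2)!$, all other contributions $o(n^{k-2})$'' is the right one for Theorem~F.

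The tangle in your write-up comes from substituting the exponent $(k-2)/(k-1)$ for the hypothesis of Theorem~F, which is $F(n)=o(n^{\alpha})$ with $\alpha<(k-2)/k$. These are not the same: $(k-2)/(k-1)$ is the sharp exponent of the paper's Theorem~1.1, and at that borderline the $j=k$ contribution $|R_{F,k}(n)-R_{F,k}(n-1)|\le F(n)^{k-1}$ is only $O(n^{k-2})$, not $o(n^{k-2})$. That is exactly where you get stuck, and no ``partial-summation trick one level down'' will produce an $o$-estimate there. With the actual Theorem~F hypothesis the difficulty evaporates: $F(n)^{k-1}=o(n^{(k-1)\alpha})$ and $(k-1)\alpha<(k-1)(k-2)/k<k-2$, so the $j=k$ term is $o(n^{k-2})$; for $1\le j\le k-1$ your own bound $O(n^{k-j-2}F(n)^{j})$ (resp.\ $F(n)^{k-2}$ when $j=k-1$) already gives $o(n^{k-2})$ since $\alpha<1$. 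So your outline is a correct proof of Theorem~F once the intended exponent is restored.

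If you really want to push to $F(n)=O(n^{(k-2)/(k-1)})$, the ``everything else is $o$'' strategy is genuinely insufficient; the paper's proof of Theorem~1.1 instead matches leading constants, using the precise bound $F(n)\le n^{(k-2)/(k-1)}/((k-2)!)^{1/(k-1)}-2$ so that $F(n)^{k-1}\le n^{k-2}/(k-2)! - c\,n^{(k-2)^2/(k-1)}+\cdots$ cancels the main term with a positive remainder.
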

In this paper, we improve their result in the following theorem.

\begin{theorem} Let $k$ be an integer with $k\ge 3$. If $A \subseteq \mathbb{N}$ satisfies
$$A(n) \leqslant \frac{n^{\frac{k-2}{k-1}}}{\sqrt[k-1]{(k-2)!}} - 2$$
for all sufficiently large integers $n$, then $R_{\mathbb{N}\setminus A,k}(n)$ is eventually strictly increasing.
\end{theorem}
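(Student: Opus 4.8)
The plan is to set $B=\mathbb N\setminus A$ and to show that the first difference $\Delta(n):=R_{B,k}(n+1)-R_{B,k}(n)$ is positive for all large $n$ (if $A$ is finite the statement is classical, so we may assume $A$ infinite). With $G_A(x)=\sum_{a\in A}x^a$ and $G_B(x)=\tfrac1{1-x}-G_A(x)$ one has $\sum_n R_{B,k}(n)x^n=G_B(x)^k$, hence $\Delta(n)=[x^{n+1}]\bigl((1-x)G_B(x)^k\bigr)$. Since $(1-x)G_B=1-(1-x)G_A$, peeling off one factor of $G_B$ at a time yields
\[
(1-x)G_B(x)^k=\sum_{i=0}^{k-1}(-1)^i G_A(x)^iG_B(x)^{k-1-i}+(-1)^k(1-x)G_A(x)^k .
\]
Taking $[x^{n+1}]$, the $i=0$ term is $R_{B,k-1}(n+1)$, which by a first–order Bonferroni bound satisfies $R_{B,k-1}(n+1)\ge\binom{n+k-1}{k-2}-O\!\bigl(A(n)\,n^{k-3}\bigr)=\tfrac{n^{k-2}}{(k-2)!}(1+o(1))$ (this is the main, positive term), while the last term equals exactly $R_{A,k}(n+1)-R_{A,k}(n)$.

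Next I would dispose of the intermediate terms. For $1\le i\le k-1$,
\[
[x^{n+1}]G_A(x)^iG_B(x)^{k-1-i}=\sum_{(a_1,\dots,a_i)\in A^i} R_{B,k-1-i}\Bigl(n+1-\sum_j a_j\Bigr)\le A(n+1)^i\binom{n+k-1-i}{k-2-i},
\]
(with $R_{A,k-1}(n+1)\le A(n+1)^{k-2}$ when $i=k-1$); inserting $A(n+1)\le n^{(k-2)/(k-1)}/\sqrt[k-1]{(k-2)!}$ bounds each of these by $O\!\bigl(n^{\,k-2-i/(k-1)}\bigr)=o(n^{k-2})$. Hence
\[
\Delta(n)=R_{B,k-1}(n+1)+(-1)^k\bigl(R_{A,k}(n+1)-R_{A,k}(n)\bigr)+o(n^{k-2}),
\]
and since $|R_{A,k}(n+1)-R_{A,k}(n)|\le\max\{R_{A,k}(n),R_{A,k}(n+1)\}$, the whole theorem comes down to the single estimate $R_{A,k}(m)\le(1-\delta)\binom{m+k-1}{k-2}$ for some fixed $\delta>0$ and all large $m$.

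This estimate is the crux, and it is exactly where the constant $\sqrt[k-1]{(k-2)!}$ (and the harmless ``$-2$'') must be used. Dropping the last coordinate, $R_{A,k}(m)\le N_{k-1}(m):=\#\{(a_1,\dots,a_{k-1})\in A^{k-1}:a_1+\dots+a_{k-1}\le m\}$. Put $\beta=\tfrac1{k-1}$ and $g(x)=x^{1-\beta}/\sqrt[k-1]{(k-2)!}$, so the hypothesis reads $A(x)\le g(x)$ for all large $x$. The finitely many small elements of $A$ contribute only $o(m^{k-2})$ to $N_{k-1}(m)$; for the rest one bounds $N_p(M)=\sum_{a\in A}N_{p-1}(M-a)$ by Abel summation against $g$ in each of the $k-1$ variables (using that each $N_{p-1}$ is nondecreasing), obtaining
\[
N_{k-1}(m)\le\frac{(1-\beta)^{k-1}}{(k-2)!}\int_{\{x_i\ge0,\ x_1+\dots+x_{k-1}\le m\}}\prod_{i=1}^{k-1}x_i^{-\beta}\,dx\ (1+o(1)).
\]
The Dirichlet integral equals $\Gamma(1-\beta)^{k-1}m^{k-2}/\Gamma(k-1)=\Gamma(1-\beta)^{k-1}m^{k-2}/(k-2)!$, so, using $(1-\beta)\Gamma(1-\beta)=\Gamma(2-\beta)$ and $\bigl(\sqrt[k-1]{(k-2)!}\bigr)^{k-1}=(k-2)!$,
\[
R_{A,k}(m)\le\frac{\Gamma(2-\beta)^{k-1}}{\bigl((k-2)!\bigr)^{2}}\,m^{k-2}(1+o(1))=\frac{\Gamma(2-\beta)^{k-1}}{(k-2)!}\binom{m+k-1}{k-2}(1+o(1)).
\]
For $k\ge3$ we have $\beta\in(0,\tfrac12]$, hence $\Gamma(2-\beta)\in[\tfrac{\sqrt\pi}{2},1)$ and $\Gamma(2-\beta)^{k-1}\le\tfrac\pi4<1\le(k-2)!$; thus the displayed constant is $\le\tfrac\pi4<1$, which is the required $(1-\delta)$. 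Feeding this back gives $\Delta(n)\ge\bigl(1-\Gamma(2-\beta)^{k-1}/(k-2)!\bigr)\tfrac{n^{k-2}}{(k-2)!}(1+o(1))>0$ for all large $n$.

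I expect the main obstacle to be making the passage from the lattice count $N_{k-1}$ to the Dirichlet integral fully rigorous: isolating the finitely many small elements of $A$, handling the (integrable) singularity of $\prod x_i^{-\beta}$ at the origin, and controlling the accumulated error terms so that the final constant is genuinely $\Gamma(2-\beta)^{k-1}/(k-2)!$ rather than something $\ge1$; the rest is bookkeeping with binomial coefficients. For the sharpness claim one would exhibit an $A$ sitting just above the critical density — for instance an initial block which is an arithmetic progression with the critical common difference, arranged so that $A(n)$ slightly exceeds $n^{(k-2)/(k-1)}/\sqrt[k-1]{(k-2)!}$ — for which $R_{A,k}$ develops oscillations of order $n^{k-2}$ that overwhelm the main term, so that $R_{\mathbb N\setminus A,k}$ fails to be eventually increasing.
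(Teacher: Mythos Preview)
Your approach is correct and genuinely different from the paper's. Both start from the same generating-function identity (your telescoping expansion $(1-x)G_B^k=\sum_{i=0}^{k-1}(-1)^iG_A^iG_B^{k-1-i}+(-1)^k(1-x)G_A^k$ is algebraically equivalent to the paper's Lemma 2.1), and both reduce the problem to controlling $R_{A,k}$ against the main term $\binom{n+k-2}{k-2}\sim n^{k-2}/(k-2)!$. The divergence is in how that control is achieved. The paper uses only the crude bound $R_{A,k}(n)\le A(n)^{k-1}$; with the stated hypothesis this gives $A(n)^{k-1}\le n^{k-2}/(k-2)!-2(k-1)n^{(k-2)^2/(k-1)}/((k-2)!)^{(k-2)/(k-1)}+\cdots$, so the leading orders \emph{exactly} cancel and the ``$-2$'' is essential: it supplies a positive secondary term of order $n^{(k-2)^2/(k-1)}$ that beats the remaining $O(n^{k-3})$ errors. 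Your route instead sharpens the bound on $R_{A,k}$ via $R_{A,k}(m)\le N_{k-1}(m)$ and an iterated Abel-summation comparison with the Dirichlet integral, yielding $R_{A,k}(m)\le \bigl(\Gamma(2-\tfrac1{k-1})^{k-1}/(k-2)!+o(1)\bigr)\,m^{k-2}/(k-2)!$ with constant strictly below $1$; hence you get a main-order gap and the ``$-2$'' is indeed harmless (your method would even allow the constant in the hypothesis to be relaxed up to $1/\Gamma(2-\tfrac1{k-1})>1/\sqrt[k-1]{(k-2)!}$). What the paper's argument buys is elementarity: no Gamma functions, no integrals, just binomial bookkeeping. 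What yours buys is a stronger quantitative conclusion ($\Delta(n)\gg n^{k-2}$ rather than $\Delta(n)\gg n^{(k-2)^2/(k-1)}$) and insensitivity to the additive constant. Two small points to tighten when you write it out: the claim $\Gamma(2-\beta)^{k-1}\le\pi/4$ is unnecessary---$\Gamma(2-\beta)<1$ for $\beta\in(0,1)$ already gives $\Gamma(2-\beta)^{k-1}/(k-2)!<1$; and the iterated integral comparison is in fact an exact inequality (not merely $1+o(1)$) once you restrict to $A''=A\cap(x_0,\infty)$, since $A''(x)\le g(x)$ holds for \emph{all} $x\ge0$ and the induction on $N_p$ preserves monotonicity in $M$.
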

In particular, for $k = 3$ we have the following Corollary.
\begin{corollary}
If $A \subseteq \mathbb{N}$ satisfies $A(n) \leqslant \sqrt{n} - 2$
for all sufficiently large integers $n$, then $R_{\mathbb{N}\setminus A,3}(n)$ is eventually strictly increasing.
\end{corollary}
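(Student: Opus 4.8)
The plan is the following. Put $B=\mathbb{N}\setminus A$; we may assume $A$ is infinite, since otherwise $B$ is cofinite and the statement is immediate. As $G_B(x)=\frac{1}{1-x}-G_A(x)$ and $R_{B,k}(n)=[x^n]G_B(x)^k$, one has
$$R_{B,k}(n+1)-R_{B,k}(n)=[x^{n+1}](1-x)G_B(x)^k=\binom{n+k-1}{k-2}+\sum_{j=1}^{k}\binom{k}{j}(-1)^j\,[x^{n+1}]\frac{G_A(x)^{j}}{(1-x)^{\,k-1-j}},$$
where for $j=k$ the factor $(1-x)^{-1}$ is read as multiplication by $1-x$, so that the $j=k$ summand is $(-1)^k\bigl(R_{A,k}(n+1)-R_{A,k}(n)\bigr)$ and the $j=k-1$ summand is $(-1)^{k-1}R_{A,k-1}(n+1)$. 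The term $j=0$ equals $\binom{n+k-1}{k-2}=\frac{(n+2)(n+3)\cdots(n+k-1)}{(k-2)!}\ge\frac{(n+1)^{k-2}}{(k-2)!}$; this is the main term, and I would show that the sum over $j\ge1$ is smaller than it.

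First, for $1\le j\le k-1$ the corresponding term is $o(n^{k-2})$. The quantity $\sum_{m\le n+1}R_{A,j}(m)$ counts $j$-tuples from $A\cap[0,n+1]$, hence is at most $A(n+1)^j$; for $1\le j\le k-2$ this gives $[x^{n+1}]\frac{G_A(x)^j}{(1-x)^{k-1-j}}=\sum_{m}R_{A,j}(m)\binom{n+1-m+k-2-j}{k-2-j}\le\binom{n+k-1-j}{k-2-j}A(n+1)^j$, and for $j=k-1$ the term $R_{A,k-1}(n+1)$ is at most $A(n+1)^{k-2}$. Since $A(n+1)=O\bigl(n^{(k-2)/(k-1)}\bigr)$ by hypothesis, each of these is $o(n^{k-2})$, hence so is their total.

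The essential point is the term $(-1)^k\bigl(R_{A,k}(n+1)-R_{A,k}(n)\bigr)$. Whatever the parity of $k$, one of $R_{A,k}(n),R_{A,k}(n+1)$ enters with the favorable sign and may be discarded, so it suffices to bound $R_{A,k}(m)$ for $m\in\{n,n+1\}$ by $(1-\varepsilon)\frac{m^{k-2}}{(k-2)!}$ with a fixed $\varepsilon>0$. Here the naive estimate $R_{A,k}(m)\le A(m)^{k-1}$ is \emph{not} enough: with the hypothesis it only yields $R_{A,k}(m)\le\frac{m^{k-2}}{(k-2)!}-\Theta\bigl(m^{(k-2)^2/(k-1)}\bigr)$, and for $k\ge4$ that deficit is of lower order than the $j=1$ term just estimated. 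Instead I would prove the following. Since in a $k$-tuple from $A$ with sum $m$ the last entry is determined by the others,
$$R_{A,k}(m)\ \le\ \#\bigl\{(a_1,\dots,a_{k-1})\in A^{k-1}:\ a_1+\cdots+a_{k-1}\le m\bigr\}.$$
Writing $A=\{a_0<a_1<\cdots\}$, $\alpha=\frac{k-2}{k-1}$, $C=\bigl((k-2)!\bigr)^{-1/(k-1)}$, the hypothesis gives $i+1=A(a_i)\le Ca_i^{\alpha}$ once $a_i$ is large, hence $a_i\ge\bigl((i+1)/C\bigr)^{1/\alpha}$; consequently every admissible tuple satisfies $\sum_{l}(j_l+1)^{1/\alpha}\le C^{1/\alpha}m$ for its index vector $(j_1,\dots,j_{k-1})$. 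A unit-cube packing (using that $t\mapsto t^{1/\alpha}$ is increasing) bounds this integer count by the volume of $\{x\in\mathbb{R}_{\ge0}^{k-1}:\sum_l x_l^{1/\alpha}\le C^{1/\alpha}m\}$, which a Dirichlet-integral computation evaluates, giving
$$R_{A,k}(m)\ \le\ \frac{\Gamma(1+\alpha)^{k-1}}{\bigl((k-2)!\bigr)^{2}}\,m^{k-2}+o\bigl(m^{k-2}\bigr)$$
(the $o$-term absorbing the boundedly many $a_i$ not yet ``large''). Because $1+\alpha\in[\tfrac32,2)$ we have $0<\Gamma(1+\alpha)<1$, so $\delta_k:=\Gamma(1+\alpha)^{k-1}/(k-2)!<1$, whence $R_{A,k}(m)\le\tfrac12(1+\delta_k)\cdot\frac{m^{k-2}}{(k-2)!}$ for all large $m$; put $\varepsilon_k=\tfrac12(1-\delta_k)>0$.

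Assembling the three parts,
$$R_{B,k}(n+1)-R_{B,k}(n)\ \ge\ \binom{n+k-1}{k-2}-(1-\varepsilon_k)\frac{(n+1)^{k-2}}{(k-2)!}-o(n^{k-2})\ \ge\ \varepsilon_k\,\frac{(n+1)^{k-2}}{(k-2)!}-o(n^{k-2}),$$
which is positive for all sufficiently large $n$, proving the theorem. For $k=3$ the $j=1$ term is merely $-3A(n+1)=O(\sqrt n)$, so there the cruder bound $R_{A,3}(m)\le A(m)^2\le(\sqrt m-2)^2$ already suffices — this is exactly where the shift by $2$ in the hypothesis supplies the needed slack — and one recovers the Corollary. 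The one genuinely delicate step is the lemma, namely obtaining a constant \emph{strictly} below $\frac1{(k-2)!}$ in the estimate for $R_{A,k}(m)$; everything else is bookkeeping.
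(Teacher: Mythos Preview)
Your argument is correct. For the Corollary itself (the case $k=3$), your final paragraph is exactly the paper's approach: the generating-function identity (the paper's Lemma~2.1) plus the trivial bound $R_{A,3}(m)\le A(m)^{2}\le(\sqrt{m}-2)^{2}=m-4\sqrt{m}+4$, with the shift by $2$ supplying precisely the slack needed to dominate the term $3A(n)=O(\sqrt{n})$.

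The bulk of your write-up, however, proves the general Theorem rather than just the Corollary, and there your route diverges from the paper's. The paper, for arbitrary $k$, again uses only the crude estimate $R_{A,k}(m)\le A(m)^{k-1}$ and relies on the ``$-2$'' to create a deficit of order $n^{(k-2)^2/(k-1)}$ against the main term. You instead establish the sharper asymptotic bound
\[
R_{A,k}(m)\ \le\ \frac{\Gamma\!\left(1+\tfrac{k-2}{k-1}\right)^{k-1}}{((k-2)!)^{2}}\,m^{k-2}+o(m^{k-2})
\]
via a lattice-point/Dirichlet-integral argument, gaining a fixed multiplicative constant $\delta_k<1$ at the top order. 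This buys robustness: with your estimates for the middle terms $1\le j\le k-2$ (which are of order $n^{\,k-2-j/(k-1)}$), the crude bound on $R_{A,k}$ would, as you note, leave a margin of order only $n^{\,k-2-(k-2)/(k-1)}$, too small to absorb the $j=1$ term when $k\ge4$. Your volume lemma sidesteps this completely. So: for $k=3$ the two proofs coincide, while for $k\ge4$ yours is a genuinely different and more self-contained argument, at the cost of invoking the Dirichlet integral and a monotone cube-packing step that the paper avoids.
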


We improve the constant factor above in the following theorem.

\begin{theorem} Let $A \subseteq \mathbb{N}$ satisfies $A(n) \leqslant \frac{2}{\sqrt{3}} \sqrt{n}-2$ for all sufficiently large integers $n$, then $R_{\mathbb{N}\setminus A,3}(n)$ is eventually strictly increasing.
\end{theorem}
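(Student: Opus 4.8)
The plan is to follow the same strategy as in the proof of Theorem 1.1 (specialized to $k=3$), but to squeeze the constant by a more careful bookkeeping of the ``boundary'' representations. Write $F = \mathbb{N}\setminus A$, so $F$ is co-finite-density-complement and $F(n) = n - A(n) \ge n - \tfrac{2}{\sqrt3}\sqrt n + 2$ for large $n$. The quantity to control is
\[
\Delta(n) = R_{F,3}(n+1) - R_{F,3}(n),
\]
and the goal is to show $\Delta(n) > 0$ for all sufficiently large $n$. The standard device is to expand $R_{F,3}(n) = R_{\mathbb{N},3}(n) - (\text{terms involving at least one element of } A)$ via inclusion--exclusion: if $A = \{a_1 < a_2 < \cdots\}$, then a triple $(x,y,z)$ summing to $n$ with all coordinates in $F$ is counted by subtracting, for each $a\in A$, the triples with a prescribed coordinate equal to $a$, then adding back triples with two coordinates in $A$, then subtracting triples fully in $A$. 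Since $R_{\mathbb{N},3}(n+1) - R_{\mathbb{N},3}(n)$ grows linearly in $n$ (it equals $\binom{n+3}{2}-\binom{n+2}{2} = n+2$), it suffices to show that the total fluctuation coming from the $A$-corrections, as $n$ increases by $1$, is smaller than $n+2$ in absolute value.

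The main steps, in order: (1) Write $R_{F,3}(n) = (n+2) \ \text{-type main term} - 3\sum_{a\in A,\, a\le n} R_{\mathbb{N},2}(n-a) + 3\sum_{a,a'\in A} R_{\mathbb{N},1}(n-a-a') - R_{A,3}(n)$, being careful with the combinatorial multiplicities for ordered triples and with the degenerate cases where coordinates coincide. (2) Take the difference $\Delta(n)$; the main term contributes exactly $1$ (the second difference of a quadratic). The dominant correction is $-3\big(\sum_{a\le n+1}(n+1-a+1) - \sum_{a\le n}(n-a+1)\big) = -3\big(A(n) + \text{an error of size }O(1)\big)$ coming from the linear growth of $R_{\mathbb{N},2}$; wait — more precisely $\sum_{a\le n} R_{\mathbb{N},2}(n-a) = \sum_{a\le n}(n-a+1)$, whose first difference in $n$ is $A(n) + $ (contribution of $a = n+1$ if $n+1\in A$, which is $1$), so this correction term of $\Delta(n)$ is about $-3A(n)$, which grows like $\tfrac{2}{\sqrt3}\cdot 3\sqrt n = 2\sqrt3\,\sqrt n$ — this is $o(n)$ and hence harmless against the $+1$? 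No: we must go to the next order, because the leading $n+2$ cancels between $R_{\mathbb{N},3}(n+1)$ and $R_{\mathbb{N},3}(n)$ only up to the constant. The correct accounting is that $\Delta(n) = R_{F,3}(n+1)-R_{F,3}(n)$ and one shows this is positive by a \emph{lower} bound: $R_{F,3}(n+1) \ge$ (number of ordered triples in $\mathbb{N}^3$ summing to $n+1$) $- 3\cdot(\text{number with a coordinate in }A)$, and $R_{F,3}(n) \le$ (number in $\mathbb{N}^3$ summing to $n$). So
\[
\Delta(n) \ge (n+3) - (n+2) - 3\,|\{(x,y,z): x+y+z=n+1,\ x\in A\}| = 1 - 3\sum_{\substack{a\in A\\ a\le n+1}} (n+2-a).
\]
This last sum is roughly $3\sum_{a\le n+1, a\in A}(n+2-a)$, which is of order $n^{3/2}$, far larger than $1$ — so the crude bound is hopeless, and one genuinely needs the full inclusion--exclusion with \emph{all} terms kept, because the positive second-order terms $+3\sum R_{\mathbb{N},1}(n-a-a')$ and $-R_{A,3}(n)$ partially cancel the negative first-order term. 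So step (2) must retain: $\Delta(n) = 1 - 3\,\delta_1(n) + 3\,\delta_2(n) - \delta_3(n)$ where $\delta_1(n) = A(n+1)$ (plus an $O(1)$ correction), $\delta_2(n) = |\{(a,a')\in A^2 : a+a'\le n+1, \text{ordered}\}| - |\{\ldots \le n\}| = |\{(a,a')\in A^2: a+a'=n+1\}|$, i.e. $\delta_2(n) = R_{A,2}(n+1)$, and $\delta_3(n) = R_{A,3}(n+1) - R_{A,3}(n)$.

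So the real inequality to establish is
\[
3\,R_{A,2}(n+1) + 1 > 3\,A(n+1) + \big(R_{A,3}(n+1) - R_{A,3}(n)\big),
\]
for all sufficiently large $n$, and this is where the hypothesis $A(n) \le \tfrac{2}{\sqrt3}\sqrt n - 2$ must be used. The key estimates are: an \emph{upper} bound on $A(n+1)$ (the hypothesis) and on $R_{A,3}(n+1)$ (trivially $R_{A,3}(m) \le \binom{A(m)+2}{2}$-type, but more usefully one bounds the difference $R_{A,3}(n+1) - R_{A,3}(n)$, which counts triples summing to $n+1$ not obtainable from triples summing to $n$ — this is at most $3 R_{A,2}(n+1)$ roughly, since each such triple contains the coordinate value forced by the ``new'' sum), versus a \emph{lower} bound on $R_{A,2}(n+1)$: here one must show $A$ has enough additive structure that $R_{A,2}(n+1)$ is not too small on average. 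But $R_{A,2}$ can be zero for individual $n$! This is the crux: the inequality cannot hold pointwise for every $n$ unless one averages, so the argument must instead compare $R_{F,3}(n+\ell)$ with $R_{F,3}(n)$ for a suitable bounded shift $\ell$, or — as is done for $k=2$ in Theorems A--D — one exploits that failure of monotonicity at infinitely many points would force $\sum R_{A,2}$ to be too small, contradicting $|A \cap [0,n]|$ being large enough via Cauchy--Schwarz: $\sum_{m\le N} R_{A,2}(m) = A(N)^2 + O(A(N))$, so on average $R_{A,2}(m) \approx A(N)^2/N$.

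The main obstacle I anticipate is exactly this passage from a pointwise-false to an averaged-true statement: one must show that if $R_{F,3}$ fails to be eventually strictly increasing, then there are infinitely many $n$ with $\Delta(n) \le 0$, i.e. infinitely many $n$ with $3 A(n+1) + (R_{A,3}(n+1)-R_{A,3}(n)) \ge 3 R_{A,2}(n+1) + 1$; summing a well-chosen subfamily of these and using $\sum_{m\le N} R_{A,2}(m) = A(N)^2 - A(N) + \text{(diagonal terms)}$ together with $\sum_{m} (R_{A,3}(m+1)-R_{A,3}(m)) = R_{A,3}(N) = $ telescoping $\le \binom{A(N)+1}{2}$-ish, should yield $3 A(N)^2/2 \lesssim 3 A(N)\cdot N / N + \ldots$, wait — the telescoping makes the $R_{A,3}$ contribution collapse to $R_{A,3}(N)\le \tfrac12 A(N)^2 A(N)/?$; getting the constants to line up so that $\tfrac{2}{\sqrt3}$ is exactly the threshold (rather than merely $c\sqrt n$ for some unspecified $c$) will require tracking the diagonal/coincidence terms in $R_{A,2}$ and $R_{A,3}$ precisely, and choosing the shift and the summation range optimally. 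Once the averaged inequality $3\sum_{m\le N}R_{A,2}(m) > 3\sum_{m\le N}A(m+1) + R_{A,3}(N) - O(N)$ is set up, plugging in $\sum R_{A,2}(m) \sim A(N)^2$, $\sum A(m) \le \tfrac{2}{\sqrt3}\cdot\tfrac23 N^{3/2}$, and $A(N)\le \tfrac{2}{\sqrt3}\sqrt N$ should close the gap, with the constant $\tfrac{2}{\sqrt3}$ emerging from balancing $3 A(N)^2$ against the $N$-linear slack $(n+2)$ summed over the range, i.e. from $3 \cdot \big(\tfrac{2}{\sqrt3}\big)^2 = 4 > $ the competing coefficient.
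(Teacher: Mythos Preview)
Your proposal contains a computational error that sends the whole strategy off course. You write that ``the main term contributes exactly $1$ (the second difference of a quadratic)'' and later that the inequality to establish is $3R_{A,2}(n+1) + 1 > 3A(n+1) + (R_{A,3}(n+1)-R_{A,3}(n))$. But $\Delta(n)=R_{F,3}(n+1)-R_{F,3}(n)$ is a \emph{first} difference, and since $R_{\mathbb{N},3}(m)=\binom{m+2}{2}$, the main term is $\binom{n+3}{2}-\binom{n+2}{2}=n+2$, not $1$. (In the line ``$\Delta(n)\ge (n+3)-(n+2)-\cdots$'' you have silently replaced $\binom{n+3}{2}$ by $n+3$.) With the correct main term your own identity reads
\[
\Delta(n)=(n+2)-3A(n+1)+3R_{A,2}(n+1)-\bigl(R_{A,3}(n+1)-R_{A,3}(n)\bigr),
\]
which is precisely Lemma~2.1 at $k=3$. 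Now $3R_{A,2}(n+1)\ge 0$ can simply be discarded; there is no need for a pointwise lower bound on $R_{A,2}$, and the entire averaging/Cauchy--Schwarz discussion is attacking a problem that does not exist.

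What you are actually missing is the one nontrivial ingredient the paper supplies: a sharp pointwise upper bound on $R_{A,3}$. After dropping $3R_{A,2}$ and using $R_{A,3}(n)-R_{A,3}(n-1)\le R_{A,3}(n)$, the task reduces to showing $n+1>3A(n)+R_{A,3}(n)$. The trivial estimate $R_{A,3}(n)\le A(n)^{2}$ only gives the constant $1$ in front of $\sqrt{n}$ (this is Corollary~1.2). The improvement to $\tfrac{2}{\sqrt3}$ comes from Lemma~2.2, which proves $R_{A,3}(n)\le \tfrac{3}{4}A(n)^{2}+\{A(n)^{2}/4\}$; the factor $\tfrac34$ is exactly what yields $\tfrac{2}{\sqrt3}=\sqrt{4/3}$. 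Plugging the hypothesis into $n+1-3A(n)-\tfrac34 A(n)^{2}-\tfrac14$ then gives the constant $\tfrac{15}{4}>0$ directly. Your proposal never isolates this bound on $R_{A,3}$, and without it the argument cannot reach the stated constant.
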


It turns out from the next theorem that the upper bound for the counting function of $A$ in Theorem 1.1 is tight up to a constant factor.

\begin{theorem} Suppose that $f(n)$ be a function satisfying $f(n)\rightarrow \infty$
as $n\rightarrow \infty$. Then
there is a set $A\subseteq \mathbb{N}$ such that $A(n)<\sqrt[k-1]{k-1}n^{\frac{k-2}{k-1}}+f(n)$ for
all sufficiently large integers $n$ and $R_{\mathbb{N}\setminus A,k}(n)<R_{\mathbb{N}\setminus A,k}(n-1)$
for infinitely many positive integers $n$.
\end{theorem}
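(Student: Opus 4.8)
The plan is to exploit the inclusion--exclusion identity that relates $R_{B,k}$, for $B=\mathbb N\setminus A$, to the representation functions of $A$ itself. Classifying a $k$-tuple by which of its coordinates lie in $A$ gives
\[
R_{B,k}(n)=\sum_{j=0}^{k}(-1)^{j}\binom{k}{j}N_{j}(n),\qquad
N_{j}(n)=\sum_{\sigma\le n}R_{A,j}(\sigma)\binom{n-\sigma+k-j-1}{k-j-1}\quad(0\le j\le k-1),
\]
with $R_{A,0}(0)=1$ and $N_{k}(n)=R_{A,k}(n)$. Taking a first difference, the $j=0$ term contributes $\binom{n+k-2}{k-2}$, and for $1\le j\le k-1$ one gets $N_{j}(n)-N_{j}(n-1)=R_{A,j}(n)+\sum_{\sigma<n}R_{A,j}(\sigma)\binom{n-\sigma+k-j-2}{k-j-2}$. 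The first thing I would record is that, under $A(n)=O(n^{(k-2)/(k-1)})$, the elementary bounds $R_{A,j}(\sigma)\le A(\sigma)^{\,j-1}$ and $\sum_{\sigma\le n}R_{A,j}(\sigma)\le A(n)^{\,j}$ force every intermediate difference to be $o(n^{k-2})$, whereas $\binom{n+k-2}{k-2}\asymp n^{k-2}$. Hence a descent $R_{B,k}(n)<R_{B,k}(n-1)$ can occur only if $(-1)^{k}\bigl(R_{A,k}(n)-R_{A,k}(n-1)\bigr)$ is negative and of order $\ge n^{k-2}$; equivalently, $R_{A,k}$ must jump \emph{up} across $n$ (if $k$ is odd) or \emph{down} across $n$ (if $k$ is even) by more than $\binom{n+k-2}{k-2}$. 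The point of the constant $\sqrt[k-1]{k-1}$ will be that it is exactly the density threshold below which such a jump is available.

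For the construction I would take $A$ to be a union of far-apart arithmetic progressions. Fix once and for all a rational $\gamma$ slightly larger than $1$ (for $k=3$ any $\gamma\in(1,\tfrac32)$ works, say $\gamma=\tfrac54$; in general $\gamma\in(1,1+\varepsilon_{k})$ for a suitable small $\varepsilon_{k}>0$). Choose integers $2\le D_{1}<D_{2}<\cdots$ inductively and put $t_{i}=(k-1)D_{i}^{\,k-2}$, $A_{i}=\{D_{i},2D_{i},\dots,t_{i}D_{i}\}$, $A=\bigcup_{i}A_{i}$. The exact shape of $t_{i}$ is forced by the following: over a single progression $\{D,2D,\dots,tD\}$ the ratio $A(n)/n^{(k-2)/(k-1)}$ is maximised at the right endpoint $n=tD$, and $\sqrt[k-1]{k-1}\,(tD)^{(k-2)/(k-1)}=t$ holds \emph{precisely when} $t=(k-1)D^{\,k-2}$; this calibration saturates the required bound there. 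In the inductive step I would choose $D_{i}$ large enough that (i) $f(m)>\sum_{i'<i}t_{i'}$ for all $m\ge D_{i}$ — possible since $f\to\infty$, and this lets the accumulated count of earlier blocks be absorbed into the slack $f(n)$ — and (ii) $D_{i}>1+\sum_{a\in A_{<i}}a$, which forces wide separation of the blocks and, crucially, decouples them arithmetically. With this, $A(n)<\sqrt[k-1]{k-1}\,n^{(k-2)/(k-1)}+f(n)$ for all large $n$ is a short check: inside block $i$, at $n=jD_{i}$, the needed estimate $\sqrt[k-1]{k-1}\,n^{(k-2)/(k-1)}\ge j$ becomes, after raising to the $(k-1)$st power, $(k-1)D_{i}^{(k-2)^{2}}\ge j$, which holds because $j\le t_{i}=(k-1)D_{i}^{\,k-2}$ and $k\ge3$; the leftover slack covers $\sum_{i'<i}t_{i'}$, and between blocks the estimate is easier still.

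I would place the descents at $n_{i}=D_{i}s_{i}$ when $k$ is odd and at $n_{i}=D_{i}s_{i}+1$ when $k$ is even, with $s_{i}=\lceil\gamma t_{i}\rceil$; since $\gamma>1$, each $n_{i}$ sits just past the top $t_{i}D_{i}$ of block $i$ and far below block $i+1$, so $A(n_{i})=\sum_{i'\le i}t_{i'}$. Condition (ii) guarantees that a sum of at most $k$ elements of $A$ equal to $D_{i}s_{i}$ or $D_{i}s_{i}\pm 1$ can only use elements of $A_{i}$ — any element of an earlier block contributes a residue in $\{1,\dots,D_{i}-2\}\pmod{D_{i}}$ that cannot be cancelled. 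Writing $c_{j}(s)=\#\{(m_{1},\dots,m_{j})\in\{1,\dots,t_{i}\}^{j}:m_{1}+\dots+m_{j}=s\}$, this gives $R_{A,k}(n_{i})=c_{k}(s_{i}),\,R_{A,k}(n_{i}-1)=0$ for odd $k$ and $R_{A,k}(n_{i})=0,\,R_{A,k}(n_{i}-1)=c_{k}(s_{i})$ for even $k$, so in both parities $(-1)^{k}\bigl(R_{A,k}(n_{i})-R_{A,k}(n_{i}-1)\bigr)=-c_{k}(s_{i})$, while the same decoupling keeps the $k-1$ intermediate differences at $o(t_{i}^{\,k-1})$. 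Using the local (Irwin--Hall) asymptotics $c_{k}(\gamma t)=\frac{t^{k-1}}{(k-1)!}\bigl(\gamma^{k-1}-k(\gamma-1)^{k-1}\bigr)+O(t^{k-2})$ for $1<\gamma<2$, together with $\binom{n_{i}+k-2}{k-2}=\frac{(D_{i}s_{i})^{k-2}}{(k-2)!}(1+o(1))$ and $D_{i}^{\,k-2}=t_{i}/(k-1)$, I would conclude
\[
R_{B,k}(n_{i})-R_{B,k}(n_{i}-1)=\frac{t_{i}^{\,k-1}}{(k-1)!}\Bigl(\gamma^{k-2}-\gamma^{k-1}+k(\gamma-1)^{k-1}\Bigr)+o\!\left(t_{i}^{\,k-1}\right).
\]
The bracket vanishes at $\gamma=1$ and has derivative $-1$ there, so it is negative for all $\gamma$ slightly above $1$; with such a fixed $\gamma$ this yields $R_{B,k}(n_{i})<R_{B,k}(n_{i}-1)$ for all large $i$, giving infinitely many descents, which proves the theorem.

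The step I expect to be the main obstacle is making the ``single-progression'' heuristic rigorous: controlling the contribution of representations that mix several blocks, both to $R_{A,k}$ at $n_{i},n_{i}-1$ and to the intermediate differences $N_{j}(n_{i})-N_{j}(n_{i}-1)$. The inequality $D_{i}>1+\sum_{a\in A_{<i}}a$ is exactly the hook: it pins every mixed sum near $n_{i}$ into a forbidden residue class modulo $D_{i}$, so it cannot occur, and everything reduces to the clean combinatorics of $\{1,\dots,t_{i}\}$. Verifying that this suffices, and then pushing through the two asymptotic inputs and the sparsity bound away from block endpoints, is where the actual work lies; the conceptual content is the identity above together with the calibration $t_{i}=(k-1)D_{i}^{\,k-2}$, which locks the sparsity threshold $\sqrt[k-1]{k-1}$ to the size of the jump $R_{A,k}$ can be made to execute.
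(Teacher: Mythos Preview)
Your proposal is correct and follows essentially the same route as the paper: the same generating-function identity (the paper's Lemma 2.1), the same construction $A=\bigcup_i\{D_i,2D_i,\dots,(k-1)D_i^{k-1}\}$ with exactly the calibration $t_i=(k-1)D_i^{k-2}$, and the same mechanism of forcing a large jump of $R_{A,k}$ at a multiple of $D_i$ just beyond the top of block $i$. The only cosmetic differences are that the paper secures $R_{A,k}(u_j-1)=0$ by making every element of $A$ even (a parity trick) rather than by your residue-mod-$D_i$ separation, and it fixes a specific offset $u_j=(k-1)N_j^{k-1}+100(k-2)(k-1)^3N_j^{k-2}$ (so effectively $\gamma\to 1$) with explicit inclusion--exclusion counts in place of your fixed $\gamma>1$ and Irwin--Hall asymptotics; your exponent ``$(k-1)D_i^{(k-2)^2}\ge j$'' should read $(k-1)D_i^{k-2}\ge j$, but the conclusion $j\le t_i$ is right.
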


Note that Shallit \cite{Sha} recently constructed a set $A$ with positive lower asymptotic density such that the function $R_{\mathbb{N}\setminus A,3}(n)$
is strictly increasing.

\section{Proofs}

The proofs of theorems are based on the next lemma, coming from Bell and Shallit's paper \cite{Bell} yet not explicitly stated there.  

\begin{lemma} For any positive integers $n$ and $k$ with $k\ge 3$, we have
\begin{eqnarray*}&&R_{\mathbb{N}\setminus A,k}(n)-R_{\mathbb{N}\setminus A,k}(n-1)\\
&=&\binom{n+k-2}{k-2}+\sum_{i=1}^{k-2}\binom{k}{i}(-1)^{i}\left(\sum_{m=0}^{n}\binom{m+k-i-2}{k-i-2}R_{A,i}(n-m)\right)\\
&&+ (-1)^{k-1}kR_{A,k-1}(n)+(-1)^{k}(R_{A,k}(n)-R_{A,k}(n-1)).
\end{eqnarray*}
\end{lemma}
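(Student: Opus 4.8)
The plan is to express $R_{\mathbb{N}\setminus A,k}(n)$ in terms of the representation functions of $A$ itself via inclusion-exclusion, and then take the first difference. Write $B = \mathbb{N}\setminus A$, so that $G_B(x) = \frac{1}{1-x} - G_A(x)$ as formal power series. Then
\[
G_B(x)^k = \left(\frac{1}{1-x} - G_A(x)\right)^k = \sum_{i=0}^{k}\binom{k}{i}(-1)^{i}\frac{1}{(1-x)^{k-i}}G_A(x)^{i},
\]
and $R_{B,k}(n)$ is the coefficient of $x^n$ on the left-hand side. The coefficient of $x^n$ in $\frac{1}{(1-x)^{k-i}}G_A(x)^{i}$ is $\sum_{m=0}^{n}\binom{m+k-i-1}{k-i-1}R_{A,i}(n-m)$ for $0\le i\le k-1$ (using $[x^{m}]\frac{1}{(1-x)^{k-i}} = \binom{m+k-i-1}{k-i-1}$), while for $i=k$ the factor $\frac{1}{(1-x)^{0}} = 1$ simply contributes $R_{A,k}(n)$. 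Separating the $i=0$ term $\binom{n+k-1}{k-1}$ and the $i=k-1$ and $i=k$ terms from the middle sum gives a closed formula for $R_{B,k}(n)$.

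Next I would form the difference $R_{B,k}(n) - R_{B,k}(n-1)$ term by term. For the $i=0$ contribution, $\binom{n+k-1}{k-1} - \binom{n+k-2}{k-1} = \binom{n+k-2}{k-2}$ by Pascal's rule. For each $1\le i\le k-2$, the difference of $\sum_{m=0}^{n}\binom{m+k-i-1}{k-i-1}R_{A,i}(n-m)$ and the same sum at $n-1$ telescopes: reindexing the second sum and using $\binom{m+k-i-1}{k-i-1} - \binom{m+k-i-2}{k-i-1} = \binom{m+k-i-2}{k-i-2}$ (Pascal again), together with the boundary term at $m=0$ giving $\binom{k-i-1}{k-i-1}R_{A,i}(n) = R_{A,i}(n)$ which combines correctly, yields $\sum_{m=0}^{n}\binom{m+k-i-2}{k-i-2}R_{A,i}(n-m)$ with sign $\binom{k}{i}(-1)^{i}$, exactly the claimed middle sum. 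For $i = k-1$ the power of $\frac{1}{1-x}$ is $1$, so $\frac{1}{(1-x)}G_A(x)^{k-1}$ has coefficient $\sum_{m=0}^{n} R_{A,k-1}(n-m)$; its first difference is simply $R_{A,k-1}(n)$, contributing $\binom{k}{k-1}(-1)^{k-1}R_{A,k-1}(n) = (-1)^{k-1}kR_{A,k-1}(n)$. Finally the $i=k$ term contributes $(-1)^{k}\bigl(R_{A,k}(n) - R_{A,k}(n-1)\bigr)$.

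The main thing to watch is bookkeeping of the binomial coefficients and the index shifts: one must be careful that $[x^m]\,(1-x)^{-(k-i)} = \binom{m+k-i-1}{k-i-1}$, that the Pascal reduction lowers the upper and lower index by exactly one in each step, and that the $m=0$ boundary terms of the telescoping sums are absorbed rather than left over. It is also worth checking the edge case $k=3$ separately or noting that the middle sum $\sum_{i=1}^{k-2}$ is empty-handled correctly (for $k=3$ it is a single term $i=1$). Since everything is an identity of formal power series with nonnegative integer coefficients, no convergence issues arise, and the whole argument is a routine—if slightly intricate—manipulation; the only genuine obstacle is organizing the signs and the collapse of the $i=k-1$ term cleanly, which is why I would present that step last and most explicitly.
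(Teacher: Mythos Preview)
Your proof is correct and follows essentially the same approach as the paper: both expand $G_{\mathbb{N}\setminus A}(x)^k = \bigl(\tfrac{1}{1-x} - G_A(x)\bigr)^k$ by the binomial theorem and read off coefficients. The only cosmetic difference is that the paper multiplies by $(1-x)$ at the generating-function level \emph{before} expanding, so the differenced series and the lowered binomial exponents $\binom{m+k-i-2}{k-i-2}$ appear directly, whereas you first write down $R_{\mathbb{N}\setminus A,k}(n)$ and then telescope each term using Pascal's rule---the two computations are equivalent.
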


\begin{proof}[\bf Proof of Lemma 2.1.] Clearly,
\begin{eqnarray*}
(1-x)(G_{\mathbb{N}\setminus A}(x))^{k}& = &\sum_{n=0}^{\infty}R_{\mathbb{N}\setminus A,k}(n)x^{n}-\sum_{n=0}^{\infty}R_{\mathbb{N}\setminus A,k}(n)x^{n+1}\\
&=&R_{\mathbb{N}\setminus A,k}(0)+\sum_{n=1}^{\infty}(R_{\mathbb{N}\setminus A,k}(n)-R_{\mathbb{N}\setminus A,k}(n-1))x^{n}.
\end{eqnarray*}
On the other hand,
\begin{eqnarray*}
&&(1-x)((G_{\mathbb{N}\setminus A)}(x))^{k} = (1-x)\left(\frac{1}{1-x}-G_{A}(x)\right)^{k}
= (1-x)\sum_{i=0}^{k}\binom{k}{i}\frac{(-1)^{i}}{(1-x)^{k-i}}G_{A}(x)^{i}\\
&=&\frac{1}{(1-x)^{k-1}} + \sum_{i=1}^{k-2}\binom{k}{i}\frac{(-1)^{i}}{(1-x)^{k-i-1}}G_{A}(x)^{i}+(-1)^{k-1}kG_{A}(x)^{k-1}+(-1)^{k}(1-x)G_{A}(x)^{k}.
\end{eqnarray*}
It is well known that
\[
\frac{1}{(1-x)^{m}} = \sum_{n=0}^{\infty}\binom{n+m-1}{m-1}x^{n}.
\]
It follows that
\begin{eqnarray*}
&&R_{\mathbb{N}\setminus A,k}(0)+\sum_{n=1}^{\infty}(R_{\mathbb{N}\setminus A,k}(n)-R_{\mathbb{N}\setminus A,k}(n-1))x^{n}\\
&=&\sum_{n=0}^{\infty}\binom{n+k-2}{k-2}x^{n} + \sum_{i=1}^{k-2}(-1)^{i}\binom{k}{i}\sum_{n=0}^{\infty}\left(\sum_{m=0}^{n}\binom{m+k-i-2}{k-i-2}R_{A,i}(n-m)\right)x^{n}\\
&&+(-1)^{k-1}k\sum_{n=0}^{\infty}R_{A,k-1}(n)x^{n}+(-1)^{k}R_{A,k}(0)+(-1)^{k}\sum_{n=0}^{\infty}(R_{A,k}(n)-R_{A,k}(n-1))x^{n}.
\end{eqnarray*}
Comparing the coefficient of $x^{n}$ on both sides of the equation, Lemma 2.1 follows immediately.
\end{proof}

\begin{proof}[\bf Proof of Theorem 1.1.] Clearly,
\begin{eqnarray*}
&&R_{A,i}(n)=|\{ (a_{1}, a_2,\dots{}, a_{i})\in A^i:  a_{1} + a_2+\cdots{} + a_{i} = n  \}|\\
&&\le |\{ (a_{1},a_2, \dots{},a_{i-1})\in A^{i-1}: a_{1},a_2, \dots{} ,a_{i-1} \le n \}| = A(n)^{i-1}.
\end{eqnarray*}
Then by Lemma 1, there exist constants $c_{1},c_{2},c_{3}, c_{4}$ only depending on $k$ such that
\begin{eqnarray*}
&&R_{\mathbb{N}\setminus A,k}(n)-R_{\mathbb{N}\setminus A,k}(n-1)\\
&=&\binom{n+k-2}{k-2}+\sum_{i=1}^{k-2}\binom{k}{i}(-1)^{i}\left(\sum_{m=0}^{n}\binom{m+k-i-2}{k-i-2}R_{A,i}(n-m)\right)\\
&&+(-1)^{k-1}kR_{A,k-1}(n)+(-1)^{k}(R_{A,k}(n)-R_{A,k}(n-1))\\
&\ge&\frac{n^{k-2}}{(k-2)!}-\sum_{i=1}^{k-2}2^{k}\sum_{m=0}^{n}\binom{m+k-i-2}{k-i-2}A(n)^{i-1}-kR_{A,k-1}(n)-A(n)^{k-1}\\
&\ge&\frac{n^{k-2}}{(k-2)!}-\sum_{i=1}^{k-2}2^{k}A(n)^{i-1}\binom{n+k-i-1}{k-i-2}
-k\bigg(\frac{n^{\frac{k-2}{k-1}}}{\sqrt[k-1]{(k-2)!}}\bigg)^{k-2}
-\bigg(\frac{n^{\frac{k-2}{k-1}}}{\sqrt[k-1]{(k-2)!}}-2\bigg)^{k-1}\\
&\ge&\frac{n^{k-2}}{(k-2)!}-c_{1}\sum_{i=1}^{k-2}A(n)^{i-1}n^{k-i-2}-k\cdot\frac{n^{\frac{(k-2)^{2}}{k-1}}}{((k-2)!)^{\frac{k-2}{k-1}}}\\
&&-\bigg(\frac{n^{k-2}}{(k-2)!}-2(k-1)\frac{n^{\frac{(k-2)^2}{k-1}}}
{((k-2)!)^{\frac{k-2}{k-1}}}+c_{2}n^{\frac{(k-2)(k-3)}{k-1}}\bigg)\\
&\ge&\frac{n^{k-2}}{(k-2)!}-c_{3}n^{k-3}-k\frac{n^{\frac{(k-2)^{2}}{k-1}}}{((k-2)!)^{\frac{k-2}{k-1}}}
-\bigg(\frac{n^{k-2}}{(k-2)!}-\frac{2(k-1)n^{\frac{(k-2)^{2}}{k-1}}}{((k-2)!)^{\frac{k-2}{k-1}}}
+c_{2}n^{\frac{(k-2)(k-3)}{k-1}}\bigg)\\
&=&\frac{k-2}{((k-2)!)^{\frac{k-2}{k-1}}}\cdot n^{\frac{(k-2)^{2}}{k-1}}-c_{4}n^{k-3}.
\end{eqnarray*}
Hence $R_{\mathbb{N}\setminus A,k}(n)-R_{\mathbb{N}\setminus A,k}(n-1)>0$ when $n$ is large enough.
\end{proof}

\begin{lemma} For any set $A$ of natural numbers and for any natural number $n$, one has $R_{A, 3}(n) \leqslant \frac{3}{4} A(n)^2+\left\{\frac{A(n)^{2}}{4}\right\},$ where $\{x\}$ denotes the
fractional part of $x$.
\end{lemma}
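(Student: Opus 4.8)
The goal is to bound $R_{A,3}(n)$, the number of ordered triples $(a_1,a_2,a_3)\in A^3$ with $a_1+a_2+a_3=n$, in terms of $A(n)$. Note that every such triple has each $a_i\le n$, so all three coordinates come from $A\cap[0,n]$, a set of size $A(n)$. The plan is to count ordered triples by first choosing an unordered ``type'' and then counting orderings.

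The main idea: once we fix the value $a_3$ (the third coordinate), the pair $(a_1,a_2)$ must satisfy $a_1+a_2=n-a_3$ with $a_1,a_2\in A\cap[0,n]$. So I would write $R_{A,3}(n)=\sum_{a\in A\cap[0,n]}R_{A,2}(n-a)$. The trivial bound $R_{A,2}(m)\le A(n)$ gives only $R_{A,3}(n)\le A(n)^2$; to gain the factor $3/4$ I need a better count. The key observation is a convexity/double-counting trick: consider the sum $\sum_{m}R_{A,2}(m)=A(n)^2$ over the relevant range, but more usefully, pair up each solution $a_1+a_2=n-a_3$ with the reversed solution $a_2+a_1=n-a_3$; a solution is its own reverse only when $a_1=a_2$. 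So $R_{A,2}(m)$ has the same parity as the number of $a\in A$ with $2a=m$, i.e. $R_{A,2}(m)\equiv [m/2\in A]\pmod 2$ when $m$ is even, and $R_{A,2}(m)$ is even when $m$ is odd. This does not immediately help either; the cleaner route is the following.

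The sharp approach is to count ordered triples $(a_1,a_2,a_3)$ with $a_1+a_2+a_3=n$ via their multiset type. A multiset $\{x,y,z\}$ with $x+y+z=n$ contributes $6$ ordered triples if $x,y,z$ distinct, $3$ if exactly two are equal, and $1$ if all three are equal. Let $D$ be the number of ``all distinct'' multisets, $E$ the number of ``exactly two equal'' multisets, $T\in\{0,1\}$ the number of ``all equal'' multisets (at most one, occurring iff $3\mid n$ and $n/3\in A$). Then $R_{A,3}(n)=6D+3E+T$, while the number of unordered multiset-solutions is $D+E+T$. Now I bound the total number of \emph{unordered pairs} and \emph{unordered triples} drawn from the $A(n)$-element set $A\cap[0,n]$: the number of multisets of size $3$ from a set of size $s=A(n)$ that sum to $n$ is at most... here I need the actual combinatorial inequality. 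The cleanest statement: among the $s=A(n)$ elements, each unordered pair $\{a_i,a_j\}$ (with $i<j$, possibly $a_i=a_j$) determines the third element $a_k=n-a_i-a_j$ at most once, so $R_{A,2}^{\le}$-type counting gives $D+E\le \binom{s}{2}$ and including equal pairs $\le\binom{s+1}{2}$. Combined with $6D+3E+T\le 6(D+E)+T$ this is too lossy; instead I would use that $2E+3D+T\le$ (number of ordered pairs summing appropriately) $= s^2$ roughly — let me reorganize.

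The right bookkeeping: count ordered pairs $(a_1,a_2)$ with $a_1,a_2\in A\cap[0,n]$ and $n-a_1-a_2\in A\cap[0,n]$; this equals $R_{A,3}(n)$ itself (choosing the first two coordinates determines the third). Now split by whether $a_1=a_2$: the number with $a_1=a_2$ is at most the number of $a$ with $n-2a\in A$, which is a partial count; the number with $a_1\ne a_2$ is even (pair $(a_1,a_2)$ with $(a_2,a_1)$). So $R_{A,3}(n)=2P+Q$ where $Q$ counts diagonal solutions $a_1=a_2=:a$, $2a+a_3=n$, and $P$ counts unordered $\{a_1,a_2\}$ with $a_1\ne a_2$ and $a_1+a_2+$ (something) $\in A$. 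We also have $Q\le A(n)$ and $2P+Q\le$ something; the finishing inequality should come from $P+Q\le\binom{A(n)}{2}$-type bound plus $Q\le A(n)$, optimized to yield $\tfrac34 A(n)^2$ with the fractional-part correction handling the parity of $A(n)$. The main obstacle I expect is getting the constant \emph{exactly} $3/4$ with the precise floor/ceiling correction $\{A(n)^2/4\}$ rather than an $O(A(n))$ slack: this forces a careful exact count of how the diagonal solutions ($a_1=a_2$), which are at most $\lfloor A(n)/?\rfloor$ in number, interact with the off-diagonal pairs, using that $3Q + $ (number of off-diagonal ordered pairs) is bounded by $A(n)^2$ together with $Q\le\lceil A(n)/2\rceil$ or similar, and then maximizing $2P+Q$ subject to $P\le\binom{A(n)}{2}-$ adjustments. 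I would set $s=A(n)$, carefully verify the extremal configuration is an arithmetic-progression-like set, and read off $\lfloor 3s^2/4\rfloor$, i.e. $\tfrac34 s^2+\{s^2/4\}$.
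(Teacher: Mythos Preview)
Your proposal does not reach a proof: every approach you sketch stalls at the trivial bound $R_{A,3}(n)\le A(n)^2$, and you acknowledge this yourself (``the main obstacle I expect is getting the constant exactly $3/4$''). Concretely, with $s=A(n)$, the multiset decomposition $R_{A,3}(n)=6D+3E+T$ together with the only constraints you can extract---$3D+E\le\binom{s}{2}$ from distinct unordered pairs and $E+T\le s$ from diagonal pairs---optimizes to $6D+3E+T\le 2(3D+E)+(E+T)\le s(s-1)+s=s^2$, not $\tfrac34 s^2$. The $2P+Q$ split has the same defect. Your final sentence (``verify the extremal configuration is an arithmetic-progression-like set, and read off $\lfloor 3s^2/4\rfloor$'') is not an argument: knowing the extremum does not prove the inequality.

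The missing idea, which the paper supplies, is structural rather than a bookkeeping refinement. Write $A\cap[0,n]=\{a_1<\cdots<a_m\}$ and observe that $R_{A,3}(n)$ equals the number of ordered pairs $(p,q)\in\{1,\dots,m\}^2$ with $a_p+a_q\in n-A$. The paper partitions the $m\times m$ grid of pairs into $m$ ``L-shaped'' chains
\[
L_i=\{(i,1),(i,2),\dots,(i,m+1-i),(i+1,m+1-i),\dots,(m,m+1-i)\},\qquad |L_i|=2m-2i+1,
\]
along each of which the sums $a_p+a_q$ are strictly increasing and hence pairwise distinct. Therefore each chain meets the $m$-element set $n-A$ in at most $\min(2m-2i+1,m)$ values, and summing gives
\[
R_{A,3}(n)\le\sum_{i=1}^{m}\min(2m-2i+1,m)=m\Big\lfloor\tfrac{m}{2}\Big\rfloor+\Big(m-\Big\lfloor\tfrac{m}{2}\Big\rfloor\Big)^{2}=\tfrac34 m^2+\Big\{\tfrac{m^2}{4}\Big\}.
\]
This chain decomposition is the step your outline lacks; without it (or an equivalent device) the constant $3/4$ is out of reach.
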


Note that Lemma 2.2 is sharp: if $A = \{0,1,\dots{} ,m\}$, then
\[
R_{A, 3}\left(\left \lfloor \frac{3m}{2} \right \rfloor \right) = \frac{3}{4} A\left(\left \lfloor \frac{3m}{2} \right \rfloor\right)^2+\left\{\frac{A\left(\left \lfloor \frac{3m}{2} \right \rfloor\right)^{2}}{4}\right\},
\]
where $\lfloor y\rfloor$ denotes the maximal integer not greater than $y$.

\begin{proof}[\bf Proof of Lemma 2.2.] Fix a natural number $n$. Let $A \cap[1, n]=\left\{a_1<a_2<\cdots<a_m\right\}$ and $\overline{A}=\{n-a_m<n-a_{m-1}<\cdots<n-a_1\}$.
For $i=1,2,\ldots,m$, we define
$$A_i=\{a_i+a_1<a_i+a_2<\cdots<a_i+a_{m+1-i}<a_{i+1}+a_{m+1-i}<\cdots<a_m+a_{m+1-i}\}.$$
Clearly,

\begin{eqnarray*}
R_{A, 3}(n)&=&\sum_{i=1}^m\left|A_i\cap \overline{A}\right| \leqslant \sum_{i=1}^m \min \{2 m-2 i+1, m\}\\
&=&\sum_{i=1}^{\left\lfloor\frac{m}{2}\right\rfloor} m+\sum_{i=\left\lfloor\frac{m}{2}\right\rfloor+1}^m(2 m-2 i+1)\\
&=&m\left\lfloor\frac{m}{2}\right\rfloor+\left(m-\left\lfloor\frac{m}{2}\right\rfloor\right)^2=\frac{3}{4} m^2+\left\{\frac{m^2}{4}\right\} .
\end{eqnarray*}
\end{proof}

\begin{proof}[\bf Proof of Theorem 1.3.]
Applying Lemma 2.1 for $k = 3$, we have
\[
R_{\mathbb{N} \backslash A,3}(n)-R_{\mathbb{N} \backslash A,3}(n-1) = n + 1 -3\sum_{m=0}^{n}R_{A,1}(n-m)+3R_{A,2}(n)-(R_{A,3}(n)-R_{A,3}(n-1))
\]
\[
= n + 1 - 3A(n) + 3R_{A,2}(n) - (R_{A, 3}(n) - R_{A, 3}(n-1)).
\]
Hence by Lemma 2.2,
\[
R_{\mathbb{N} \backslash A,3}(n)-R_{\mathbb{N} \backslash A,3}(n-1)\ge n + 1 - 3A(n) - R_{A,3}(n)
\]
\[
\ge n+1-3\left(\frac{2}{\sqrt{3}}\sqrt{n}-2\right)
-\frac{3}{4}\left(\frac{2}{\sqrt{3}}\sqrt{n}-2\right)^{2}-\frac{1}{4}=\frac{15}{4}> 0,
\]
which completes the proof.
\end{proof}

\bigskip

\begin{proof}[\bf Proof of Theorem 1.4.]
We may suppose that $f(n)<\sqrt[k-1]{k-1}n^{\frac{k-2}{k-1}}$.
We define an infinite sequence of natural numbers $N_{1}, N_{2}, \dots{}$ by induction.
Let $N_{1} = 100k^4$. Assume that $N_{1}, \dots{} ,N_{j}$ is already defined. Let $N_{j+1}$ be an even number with $N_{j+1} > 100k^4N_{j}^{k-1}$ and $f(n) > (k-1)(N_{1}^{k-2} + \cdots{} + N_{j}^{k-2})$ for every $n \ge N_{j+1}$. We define the set $A$ in the following way.
\[
A = \bigcup_{j=1}^{\infty}\{N_{j}, 2N_{j}, 3N_{j}, \dots{} ,(k-1)N_{j}^{k-1}\}.
\]

First, we give an upper estimation for $A(n)$. Let $n\ge 100k^4$. Then there exists an index $j$ such that
$N_{j} \le n < N_{j+1}$. Define $l$ as the largest integer with $l \le (k-1)N_{j}^{k-2}$ and $lN_{j} \le n$. Then we have
\begin{eqnarray*}&&A(n)-\sqrt[k-1]{k-1}n^{\frac{k-2}{k-1}}\\
 &\le & (k-1)(N_{1}^{k-2} + \cdots{} + N_{j}^{k-2}) + l - \sqrt[k-1]{k-1}(lN_{j})^{\frac{k-2}{k-1}}\\
&=&(k-1)(N_{1}^{k-2} + \cdots{} + N_{j}^{k-2}) + l^{\frac{k-2}{k-1}}(l^{\frac{1}{k-1}}-(k-1)^{\frac{1}{k-1}}N_{j}^{\frac{k-2}{k-1}})\\
&\le & f(n),
\end{eqnarray*}
which implies that
$$
A(n)<\sqrt[k-1]{k-1}n^{\frac{k-2}{k-1}}+f(n).
$$

Next we shall prove that there exist infinitely many positive integers $n$ such that
$R_{\mathbb{N}\setminus A,k}(n)<R_{\mathbb{N}\setminus A,k}(n-1)$. In order to prove this,
we divide into two cases according to the parity of $k$.

Suppose that $k$ is an odd integer. For $j=1,2,\ldots$, we define
$$
u_j=(k-1)N_{j}^{k-1} + 100(k-2)(k-1)^3N_{j}^{k-2}.
$$
Now, we show that $R_{\mathbb{N}\setminus A,k}(u_j) < R_{\mathbb{N}\setminus A,k}(u_j-1)$ when $j$ is large enough.

Since all the elements of $A$ are even and $u_j-1$ is odd, it follows that $R_{A,k}(u_j-1)=0$.
By Lemma 2.1, we have
\begin{eqnarray}
&&\nonumber R_{\mathbb{N}\setminus A,k}(u_j) - R_{\mathbb{N}\setminus A,k}(u_j-1)\\
&=&\nonumber \binom{u_j+k-2}{k-2}+\sum_{i=1}^{k-2}\binom{k}{i}(-1)^{i}\left(\sum_{m=0}^{u_j}\binom{m+k-i-2}{k-i-2}R_{A,i}(u_j-m)\right)\\
&&\nonumber+ (-1)^{k-1}kR_{A,k-1}(u_j)+ (-1)^{k}(R_{A,k}(u_j)-R_{A,k}(u_j-1))\\
&\le&\nonumber\binom{u_j+k-2}{k-2}+k^2\left(\sum_{m=0}^{u_j}\binom{m+k-4}{k-4}R_{A,2}(u_j-m)\right)\\
&&+\sum_{i=3}^{k-2}2^{k}\sum_{m=0}^{u_j}\binom{m+k-i-2}{k-i-2}A(u_j)^{i-1}+ kR_{A,k-1}(u_j)-R_{A,k}(u_j).
\end{eqnarray}

Next we shall give a bound for each term of the right hand side of (1) respectively.

There exists an constant $c_5$ only depending on $k$ such that
\begin{equation}
\binom{u_j+k-2}{k-2} \le \frac{(k-1)^{k-2}N_{j}^{k^{2}-3k+2}+100(k-2)^2(k-1)^{k}N_{j}^{k^{2}-3k+1}+c_{5}N_{j}^{k^{2}-3k}}{(k-2)!}
\end{equation}
and
\begin{eqnarray}
&&\nonumber k^2\sum_{m=0}^{u_j}\binom{m+k-4}{k-4}R_{A,2}(u_j-m)\le k^2\sum_{m=0}^{u_j}\binom{m+k-4}{k-4}A(u_j-m) \\
&\le& \nonumber k^2\sum_{m=0}^{u_j}\binom{m+k-4}{k-4}A(kN_{j}^{k-1})\le k^2\sum_{m=0}^{u_j}\binom{m+k-4}{k-4}2\sqrt[k-1]{k-1}(kN_j^{k-1})^{\frac{k-2}{k-1}} \\ &\le& \nonumber k^2\sum_{m=0}^{u_j}\binom{m+k-4}{k-4}2k N_j^{k-2}=2k^3N_j^{k-2}\binom{u_j+k-3}{k-3}\\
&\le & 2k^3N_j^{k-2}\binom{kN_j^{k-1}}{k-3}\le \frac{2k^k}{(k-3)!}N_j^{k^2-3k+1}.
\end{eqnarray}
Furthermore,
\begin{eqnarray}
&&\nonumber\sum_{i=3}^{k-2}2^{k}\sum_{m=0}^{u_j}\binom{m+k-i-2}{k-i-2}A(u_j)^{i-1}\\
&\le & \nonumber c_{6}\sum_{i=3}^{k-2}2^{k}\sum_{m=0}^{u_j}\binom{m+k-i-2}{k-i-2}((N_{j}^{k-1})^{\frac{k-2}{k-1}})^{i-1}\\
&\le&\nonumber c_{6}\sum_{i=3}^{k-2}2^{k}N_{j}^{(k-2)(i-1)}\sum_{m=0}^{u_j}\binom{m+k-i-2}{k-i-2}\\
&=& \nonumber c_{6}\sum_{i=3}^{k-2}2^{k}N_{j}^{(k-2)(i-1)}\binom{u_j+k-i-1}{k-i-1}\\
&\le &\nonumber c_{7}\sum_{i=3}^{k-2}N_{j}^{(k-2)(i-1)}\cdot N_{j}^{(k-1)(k-i-1)}\\
&=&c_{7}\sum_{i=3}^{k-2}N_{j}^{k^{2}-3k-i+3} \le c_{8}N_{i}^{k^{2}-3k},
\end{eqnarray}
where $c_{6}, c_{7}$ and $c_{8}$ are constants only depending on $k$.
Moreover,
\begin{eqnarray}
R_{A,k-1}(u_j))&\le & \nonumber A(u_j)^{k-2}
\le A(kN_{j}^{k-1})^{k-2}\\
& \le &(2\sqrt[k-1]{k-1}(kN_{j}^{k-1})^{\frac{k-2}{k-1}})^{k-2}
\le (2k)^{k-2}N_{j}^{(k-2)^{2}}.
\end{eqnarray}
Obviously,
\begin{eqnarray*}
    &&R_{A,k}(u_j) \\
    &\ge&|\{(x_{1}, \dots{} ,x_{k})\in (\mathbb{Z}^+)^k: \sum_{t=1}^k x_t=u_j, N_{j}\mid x_{t}, x_{t} \le (k-1)N_{j}^{k-1}~\text{for}~t=1,\ldots,k\}|\\
    &=&|\{(y_{1}, \dots{} ,y_{k})\in (\mathbb{Z}^+)^k: \sum_{t=1}^k y_t = \frac{u_j}{N_j}, y_{t} \le (k-1)N_{j}^{k-2}~\text{for}~t=1,\ldots,k\}|\\
    &=&|\{(y_{1}, \dots{} ,y_{k})\in (\mathbb{Z}^+)^k: \sum_{t=1}^k y_t  = \frac{u_j}{N_j}\} |\\
    &&-|\{(y_{1}, \dots{} ,y_{k})\in (\mathbb{Z}^+)^k: \sum_{t=1}^k y_t  = \frac{u_j}{N_j},~y_{t} > (k-1)N_{j}^{k-2}~\text{for some}~t\in \{1,\ldots,k\}\}|.
\end{eqnarray*}
We see that
\begin{eqnarray*}&&
|\{(y_{1}, \dots{} ,y_{k})\in (\mathbb{Z}^+)^k: y_{1} + \cdots{} + y_{k} = \frac{u_j}{N_j}\}|\\
&= &\binom{\frac{u_j}{N_j}-1}{k-1}
\ge \frac{N_{j}^{k^{2}-3k+2}+100(k-2)(k-1)^{k+2}N_{j}^{k^{2}-3k+1}+c_{9}N_{j}^{k^{2}-3k}}{(k-1)!},
\end{eqnarray*}
where $c_{9}$ is a constant only depending on $k$, and
\begin{eqnarray*}&&
|\{(y_{1}, \dots{} ,y_{k})\in (\mathbb{Z}^+)^k: \sum_{t=1}^k y_t  = \frac{u_j}{N_j},~y_{t} > (k-1)N_{j}^{k-2}~\text{for some}~t\in \{1,\ldots,k\}\}|\\
&=& k|\{(z_{1}, \dots{} ,z_{k})\in (\mathbb{Z}^+)^k: z_{1} + \cdots{} + z_{k} = 100(k-2)(k-1)^3N_{j}^{k-3}\}|\\
&\le & k(100(k-2)(k-1)^3)^kN_{j}^{k^{2}-3k}.
\end{eqnarray*}
The last equality holds because if $y_{1} + \cdots{} + y_{k} = \frac{u_j}{N_j}$ with $y_{t} > (k-1)N_{j}^{k-1}$,
then $$y_{1} + \cdots{} + y_{t-1} + (y_{t}-(k-1)N_{j}^{k-2}) + y_{t+1} + \cdots{} + y_{k} = 100(k-2)(k-1)^3N_{j}^{k-3},$$
where every terms are positive. Furthermore, if $z_{1} + \cdots{} + z_{k} = 100(k-2)(k-1)^3N_{j}^{k-3}$, $z_{i}\in \mathbb{Z}^{+}$, then one can create $k$ different sums of the form $ y_{1} + \cdots{} + y_{k} = \frac{u_j}{N_j}$ with $y_{i} = z_{i}$ if $i\neq t$ and $y_{t} = z_{t} + (k-1)N_{j}^{k-2}$.

Therefore
\begin{eqnarray}R_{A,k}(u_j) \ge \frac{(k-1)^{k-1}N_{j}^{k^{2}-3k+2}+100(k-2)(k-1)^{k+2}N_{j}^{k^{2}-3k+1}+c_{10}N_{j}^{k^{2}-3k}}{(k-1)!},
\end{eqnarray}
where $c_{10}$ is a constant. In view of (1)$-$(6), we get
\begin{eqnarray*}
&&R_{\mathbb{N}\setminus A,k}(u_j) - R_{\mathbb{N}\setminus A,k}(u_j-1)\\
& \le &\frac{(k-1)^{k-2}N_{j}^{k^{2}-3k+2}+100(k-2)^2(k-1)^k N_{j}^{k^{2}-3k+1}+c_{5}N_{j}^{k^{2}-3k}}{(k-2)!}\\
&& +\frac{2k^k}{(k-3)!}N_j^{k^2-3k+1} + c_{8}N_{i}^{k^{2}-3k} + (2k)^{k-2}N_{j}^{(k-2)^{2}}\\
&&-\frac{(k-1)^{k-1}N_{j}^{k^{2}-3k+2}+100(k-2)(k-1)^{k+2}N_{j}^{k^{2}-3k+1}+c_{10}N_{j}^{k^{2}-3k}}{(k-1)!}\\
&=&\left( \frac{2k^k}{(k-3)!} -100\frac{(k-1)^{k}}{(k-3)!} \right)N_{j}^{k^{2}-3k+1} + (2k)^{k-2}N_{j}^{(k-2)^{2}} + c_{11}N_{j}^{k^{2}-3k}.
\end{eqnarray*}
where $c_{11}$ is a constant. Thus we have $R_{\mathbb{N}\setminus A,k}(u_j) < R_{\mathbb{N}\setminus A,k}(u_j-1)$ when $j$ is large enough.

If $k$ is even, then the same argument as above shows that $R_{\mathbb{N}\setminus A,k}(u_j+1) < R_{\mathbb{N}\setminus A,k}(u_j)$ when $j$ is large enough.
\end{proof}

\end{document}